\newtheorem{thm}{Theorem}[section]
\newtheorem{lemma}[thm]{Lemma}
\newtheorem{prop}[thm]{Proposition}
\newtheorem{cor}[thm]{Corollary}
\newtheorem{question}[thm]{Question}
\newtheorem{definition}[thm]{Definition}
\theoremstyle{definition}
\newtheorem{example}[thm]{Example}
\theoremstyle{remark}
\newtheorem{remark}[thm]{Remark}
\def\R{{\mathbb{R}}}
\def\Z{{\mathbb{Z}}}
\def\F{{\mathbb{F}}}
\def\aa{\mathbf{a}}
\def\bb{\mathbf{b}}
\def\cc{\mathbf{c}}
\def\kk{\mathbf{k}}
\def\vv{\mathbf{v}}
\def\triv{\mathbf{1}}
\def\AAA{\mathbf{A}}
\def\BBB{\mathbf{B}}
\def\CCC{\mathbf{C}}
\def\sgn{\mathrm{sgn}}
\def\Des{\mathrm{Des}}
\def\span{\mathrm{span}}
\def\virtchars{\mathcal{C}}
\def\flags{\mathcal{F}}
\def\specht{\chi}
\def\freelrb{{\mathscr{F}}}
\def\qfreelrb{\mathscr{F}^{(q)}}
\def\derangement{\mathfrak{d}}
\def\derangementrep{{\mathscr{D}}}
\def\ch{{\mathrm{ch}}}
\def\symm{\mathfrak{S}}
\def\higherLie{\mathfrak{L}}
\newcommand{\qbinom}[3]{\left[ \begin{matrix} #1 \\ #2 \end{matrix}\right]_{#3}}
\title[Invariant Theory for the Free left-regular band and a $q$-analogue]{Invariant Theory for the Free left-regular band \\ and a $q$-analogue}
\author{Sarah Brauner}
\author{Patricia Commins}
\author{Victor Reiner}
\address{School of Mathematics, University of Minnesota, Minneapolis, MN 55455, USA}
\email{braun622@umn.edu, commi010@umn.edu, reiner@umn.edu}
\begin{document}

\begin{abstract}
We examine from an invariant theory viewpoint the monoid algebras for two monoids having large symmetry groups.  The first monoid is the 
{\it free left-regular band} on $n$ letters,
defined on the set of all {\it injective} words, that is, the words with at 
most one occurrence of each letter. This monoid carries the action of the symmetric group.
The second monoid is one of its $q$-analogues, considered by K. Brown, carrying an action of the finite general linear group. 
In both cases, we show that the invariant subalgebras are
 semisimple commutative algebras, and characterize them using \emph{Stirling} and \emph{q-Stirling numbers}.  
 
 We then use results from the theory of random walks and random-to-top shuffling to decompose the entire monoid algebra into irreducibles, simultaneously as a module over the invariant ring and as a group representation. 
 Our irreducible decompositions are described in terms of \emph{derangement symmetric functions} introduced by D\'esarm\'enien and Wachs. 
\end{abstract}

\subjclass{
05E10, 
16W22, 
60J10 
}

\keywords{left-regular band, shuffle, random-to-top, random-to-random, Bidigare-Hanlon-Rockmore, Stirling number, semigroup, monoid, symmetric group, general linear group, unipotent character}

\dedicatory{To the memory of Georgia Benkart.}

\maketitle

\section{Introduction}
\label{intro-section}

Motivated by results on mixing times for shuffling algorithms on permutations, Bidigare  \cite{Bidigare-thesis} and Bidigare, Hanlon and Rockmore \cite{BHR} developed a complete spectral analysis for a class of random walks on chambers of a hyperplane arrangement.  Their work relied heavily on the {\it Tits semigroup} structure on the cones of the arrangement.
Later Brown \cite{BrownOnLRBs} generalized their analysis
to random walks coming from
semigroups $\freelrb$ which form a {\it left-regular band (LRB)}, meaning that $x^2=x$ for all $x$ and $xyx=xy$ for all $x,y$ in $\freelrb$.

Here we study two examples
of left-regular bands $M$, related to those discussed by Brown, having actions of large groups of monoid automorphisms $G$: 
\begin{itemize}
    \item the {\it free LRB on $n$ letters} \cite[\S1.3]{BrownOnLRBs}, denoted $\freelrb_n$,
    with $G$ the symmetric group $\symm_n$, and
    \item a $q$-analogue $\qfreelrb_n$ related to monoids in \cite{BrownOnLRBs}, and $G$ the general linear group $GL_n:=GL_n(\F_q)$.
\end{itemize}
For both monoids $M=\freelrb_n,\qfreelrb_n$, we examine the {\it monoid algebra} $R:=\kk M$ with coefficients in a commutative ring $\kk$, and answer the
two {\it main questions of invariant theory} for $G$ acting on $R$:

\begin{question}
\label{first-main-invariant-theory-question}
What is the structure of the invariant subalgebra $R^G$?
\end{question}

\begin{question}
\label{second-main-invariant-theory-question}
What is the structure of $R$, simultaneously as an $R^G$-module
and $G$-representation?
\end{question}

Section~\ref{invariant-subalgebra-section} answers
Question~\ref{first-main-invariant-theory-question}
with our first main result,
using the combinatorics of {\it Stirling} and {\it $q$-Stirling numbers}.  We paraphrase it here; see Theorem~\ref{semisimplicity-theorem} for a more precise
statement.  

\begin{thm}
\label{paraphrased-semisimplicity-thm}
Consider either monoid $M=\freelrb_n,\qfreelrb_n$ with
symmetry groups $G=\symm_n, GL_n$,
and assume that $\kk$ is a field in which $|G|$
is invertible.

\begin{itemize}
    \item The invariant subalgebra 
$R^G$ is a commutative subalgebra of $R$ generated by a single element; call this element $x$ for $M = \freelrb_n$ and $x^{(q)}$ for $M = \qfreelrb_n$.

\item The elements $x, x^{(q)}$ have
minimal polynomials
$$
f(X)=
\begin{cases}
X(X-1)(X-2) \cdots (X-n)
& \text{ if }M=\freelrb_n,\\
X(X-[1]_q)(X-[2]_q) \cdots (X-[n]_q)
& \text{ if }M=\qfreelrb_n.\\
\end{cases}
$$
where 
$
[m]_q:=1 + q + \cdots + q^{m-1}
$
is a standard {\it $q$-analogue} 
of the integer $m \geq 0$.

\item
In particular, $R^G \cong \kk[X]/(f(X))$,
and $R^G$ acts semisimply on $R$, with
\begin{itemize}
    \item 
$x$-eigenvalues $0,1,2,\ldots,n$ on $R=\kk\freelrb_n$,
\item $x^{(q)}$-eigenvalues $[0]_q,[1]_q,\ldots,[n]_q$  on
$R=\kk\qfreelrb_n$.
\end{itemize}
\end{itemize}
\end{thm}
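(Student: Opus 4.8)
The plan is to determine the invariant subalgebra $R^G$ outright by a computation inside the monoid, and then read off the remaining assertions by elementary commutative algebra; the two monoids $\freelrb_n$ and $\qfreelrb_n$ are handled in parallel. First, since $G$ acts on $M$ by monoid automorphisms it permutes the standard $\kk$-basis of $R=\kk M$, and two basis elements lie in one $G$-orbit precisely when they have the same length $k\in\{0,1,\dots,n\}$: $\symm_n$ is transitive on injective words of length $k$, and $GL_n$ is transitive on the length-$k$ flags $0=V_0\subsetneq V_1\subsetneq\cdots\subsetneq V_k$ (with $\dim V_i=i$) defining $\qfreelrb_n$. Hence $R^G$ has $\kk$-basis the orbit sums $e_0,e_1,\dots,e_n$, where $e_k$ is the sum of all length-$k$ elements of $M$; in particular $\dim_\kk R^G=n+1$, and $R^G$ is a subalgebra of $R$ because $G$ acts by algebra automorphisms.

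The heart of the argument is the recursion
\[
x\,e_k \;=\; c_k\,e_k + d_k\,e_{k+1}\qquad(0\le k\le n,\ e_{n+1}:=0),
\]
where $x:=e_1$ (for $\qfreelrb_n$, $x=x^{(q)}$ is the sum of all lines in $\F_q^n$), and $c_k=k,\ d_k=1$ for $M=\freelrb_n$ while $c_k=[k]_q,\ d_k=q^k$ for $M=\qfreelrb_n$. Each product of a length-$1$ element with a length-$k$ element has length $k$ or $k+1$, so by the transitivity above $x\,e_k$ is automatically of the displayed shape, with $c_k$ (resp.\ $d_k$) the number of pairs (length-$1$ element, length-$k$ element) whose monoid product equals a fixed target of length $k$ (resp.\ of length $k+1$). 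Using the explicit product — append the letters not already present, resp.\ add the new line to every subspace of the flag and delete repetitions — one checks $d_k=1$ for $\freelrb_n$, and, via a short count of intermediate flags inside the top subspace of the target, $d_k=q^k$ for $\qfreelrb_n$. The value of $c_k$ is then forced by applying the augmentation $\kk M\to\kk$ (sending every element of $M$ to $1$) to the recursion: writing $N_k$ for the number of length-$k$ elements of $M$, this reads $N_1 N_k=c_kN_k+d_kN_{k+1}$, and since $N_{k+1}/N_k$ equals $n-k$ (resp.\ $[n-k]_q$) and $N_1$ equals $n$ (resp.\ $[n]_q$), we get $c_k=k$ (resp.\ $c_k=[n]_q-q^k[n-k]_q=[k]_q$, using $[n]_q=[k]_q+q^k[n-k]_q$).

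Because $d_k$ is a unit in the field $\kk$ (in the $q$-case, $q$ is invertible since $|GL_n|$ is), the recursion rearranges to $e_{k+1}=d_k^{-1}(x-c_k)e_k$, so inductively $e_k=\left(\prod_{j<k}d_j\right)^{-1}\prod_{j=0}^{k-1}(x-c_j)\in\kk[x]$ — the falling factorial $(x)_k=x(x-1)\cdots(x-k+1)$ for $\freelrb_n$, and a $q$-analogue for $\qfreelrb_n$; inverting this triangular relation expresses the monomials $x^m$ in the $e_k$ with $(q\text{-})$Stirling-number coefficients, which is the sharper form recorded in Theorem~\ref{semisimplicity-theorem}. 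Since the $e_k$ span $R^G$, we conclude $R^G=\kk[x]$, commutative and generated by one element. The $k=n$ instance of the recursion reads $(x-c_n)e_n=0$, i.e.\ $\prod_{k=0}^{n}(x-c_k)=0$, which is exactly $f(x)=0$ for the $f$ in the statement; moreover $1,x,\dots,x^n$ are linearly independent (being an invertible triangular combination of the linearly independent orbit sums $e_0,\dots,e_n$), so $\deg f=n+1$ forces $f$ to be the minimal polynomial of $x$, whence $R^G\cong\kk[X]/(f(X))$. Finally, as $|G|$ is invertible in $\kk$, the roots $c_0,\dots,c_n$ of $f$ — namely $0,1,\dots,n$, resp.\ $[0]_q,[1]_q,\dots,[n]_q$ — are pairwise distinct, so $f$ is separable; thus $\kk[X]/(f)\cong\kk^{\,n+1}$ is semisimple, and the operator $L_x$ of left multiplication by $x$ on $R$, which satisfies $f(L_x)=L_{f(x)}=0$, is diagonalizable with eigenvalues among the $c_k$. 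Restricting $L_x$ to the subalgebra $R^G$, on which its minimal polynomial is all of $f$, shows every $c_k$ actually occurs, yielding the stated eigenvalue list and the semisimple action of $R^G$ on $R$.

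The only genuine labor is the recursion, and within it the $q$-analogue: one must carefully track how the flag product can collapse an initial segment of the target flag, and execute the subspace count that produces the factor $q^k$. Everything afterwards is formal commutative algebra. (Alternatively, the eigenvalues in the last step also follow from the Bidigare--Hanlon--Rockmore/Brown spectral theorem for random walks on a left-regular band, which attaches to a ``flat'' subspace $W$ the eigenvalue $\sum_{\text{lines }\ell\subseteq W}1=[\dim W]_q$; but the self-contained count above additionally supplies the $(q\text{-})$Stirling-number combinatorics advertised in the introduction.)
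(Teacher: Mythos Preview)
Your proof is correct and follows the same overall architecture as the paper: identify the orbit-sum basis $\{e_k\}$ of $R^G$, establish the key recursion $x\,e_k=c_k e_k+d_k e_{k+1}$ (the paper's Lemma~\ref{x-multiplication}), and then read off generation by $x$, the minimal polynomial, and semisimplicity from the triangular matrix this produces.

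The one genuine difference is how you extract the constant $c_k$. The paper computes both $c_k$ and $d_k$ by direct bijective counts; in the $q$-case the argument for $c_k=[k]_q$ requires tracking the smallest index $i_0$ with $A_1\subseteq B_{i_0}$ and summing $q^{i_0-1}$ over $i_0$. You instead compute only $d_k$ directly and then recover $c_k$ by applying the augmentation $\kk M\to\kk$ to the recursion, turning it into the numerical identity $N_1 N_k=c_k N_k+d_k N_{k+1}$ among cardinalities. This is a clean shortcut: it replaces the more delicate half of the $q$-case count with the elementary identity $[n]_q=[k]_q+q^k[n-k]_q$. The paper's direct count, on the other hand, is self-contained and makes the combinatorics of the flag product fully explicit. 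Either way, once the recursion is in hand the remaining deductions (including the $(q\text{-})$Stirling connection you allude to, which is the paper's Corollary~\ref{Stirling-corollary}) are identical.
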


Since the above hypothesis that $|G|$ is invertible in $\kk$ also implies that
$\kk G$ acts semisimply by Maschke's Theorem, this leads to our next goal:  a complete answer to Question~\ref{second-main-invariant-theory-question} above, decomposing the monoid algebra $R$ into simple modules for the simultaneous (commuting) actions of $R^G$ and $G$. The fact that $R^G$ is generated by a single, semisimple element $x$ (respectively $x^{(q)}$) reduces this problem to understanding each eigenspace of $x$ (respectively $x^{(q)}$) as a $\kk G$-module. 

To describe these $\kk G$-modules, recall that irreducible representations $\{ \chi^{\lambda} \} $ of $\symm_n$ are indexed by partitions $\lambda$ of $n$ and let 
$
\virtchars(\symm) := 
\bigoplus_{n=0}^\infty
\virtchars(\symm_n),
$
where $\virtchars(\symm_n)$ denotes
the $\Z$-module of {\it virtual characters}
of $\symm_n$. Then the classical \emph{Frobenius characteristic map} $\ch$ is an algebra isomorphism between $\virtchars(\symm)$ and the ring of symmetric functions $\Lambda$.  It has $\ch(\chi^\lambda)=s_\lambda$, the {\it Schur function}, and the trivial representation $\triv_n$ has $\ch(\triv_n)=h_n$, the {\it complete homogeneous} symmetric function.

There is a parallel and $q$-analogous story for a subset of irreducible representations $\{ \chi^{\lambda}_q \}$ of $GL_n$ called the \emph{unipotent representations}, also indexed by partitions $\lambda$ of $n$. These are the irreducible constitutents of the $GL_n$-permutation action on
the set $GL_n/B=\flags(V)$ of {\it complete flags of subspaces} in 
$V=(\F_q)^n$. Here too there is a $q$-\emph{Frobenius characteristic map} $\ch_q$ that defines an algebra isomorphism between $\virtchars(GL):=\bigoplus_{n=0}^\infty \virtchars(GL_n)$ and $\Lambda$, where  $\virtchars(GL_n)$ is the free $\Z$-submodule of the
class functions on $GL_n$ spanned by the unipotent characters
$\{\specht_q^\lambda\}$. As one might hope, $\ch_q(\specht_q^\lambda) = s_\lambda$ and $\ch_q(\triv_{GL_n}) = h_n$, where $\triv_{GL_n}$ is the trivial representation of $GL_n$. 

This allows us to phrase parallel answers
 to Question~\ref{second-main-invariant-theory-question}, in terms of an important family of symmetric functions introduced by D\'esarm\'enien and Wachs in \cite{FrenchDesarmenienWachs} which we will call the \emph{D\'esarm\'enien-Wachs derangement symmetric functions} $\{ \derangement_n \}_{n = 0, 1,2, \ldots}$, reviewed in Section~\ref{sec:derangement}.   Here $\derangement_n$ is both the Frobenius image of an
 $\symm_n$-representation $\derangementrep_n$ that we call the \emph{Derangement representation}, as well as the $q$-Frobenius image of a $q$-analogous $GL_n$-representation $\derangementrep_n^{(q)}$. As the name suggests, these representations have dimensions counted by the \emph{derangement numbers} and \emph{$q$-derangement numbers}, respectively\footnote{There are two natural families of $\kk\symm_n$-modules whose dimensions are the derangement numbers, discussed in \cite[Thm. 1.2]{HershReiner}.  The representation $\derangementrep_n$ here is the one with character $\widehat{\mathrm{Lie}}_n$ in the notation of \cite[eqn. (1)]{HershReiner}.}. They have irreducible decomposition
\[ \derangementrep_n \cong \bigoplus_Q \specht^{\lambda(Q)}, \hspace{3em}\derangementrep^{(q)}_n \cong \bigoplus_Q \specht_q^{\lambda(Q)},  \]
where $Q$ runs through all standard Young tableaux of size $n$ whose {\it first ascent} is even \cite{ReinerWebb}. Derangement symmetric functions have connections to many well-studied objects in combinatorics such as the complex of injective words \cite{ReinerWebb}, random-to-top and random-to-random shuffling \cite{Uyemura-Reyes}, higher Lie characters \cite{Uyemura-Reyes}, and configuration spaces \cite{HershReiner}; see Section~\ref{sec:derangement}. We add to this list by showing they form crucial building blocks for the invariant theory of $\kk \freelrb_n$ and $\kk \qfreelrb_n$.

Section~\ref{main-results-section} derives the following answer to Question~\ref{second-main-invariant-theory-question}, paraphrased here---see Theorem~\ref{precise-version-of-semigroupeigen} for a more precise statement.

\begin{thm}
\label{thm:semigroupeigen}
Let $\kk$ be a field whose characteristic does not divide $|G|$.  Then
when $x, x^{(q)}$ act on $\kk \freelrb_{n}, \kk \qfreelrb_{n}$,
for each $j=0,1,2,\ldots,n$, the
$j$-eigenspace for $x$ and
$[j]_q$-eigenspace for $x^{(q)}$ carry $G$-representations with
the same Frobenius map images
$$
\ch \ker\left( (x-j)|_{\kk \freelrb_n} \right)
= \sum_{\ell=j}^n \ h_{n-\ell} \cdot h_{j} \cdot \derangement_{\ell-j}
= \ch_q \ker\left( (x^{(q)}-[j]_q)|_{\kk \qfreelrb_n} \right).
$$
\end{thm}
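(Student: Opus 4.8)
The plan is to attack the two halves of the identity in parallel, since the combinatorics on the symmetric-function side is identical; the only difference is whether one applies $\ch$ to $\symm_n$-characters or $\ch_q$ to the unipotent $GL_n$-characters. First I would set up, for $M=\freelrb_n$, an explicit $\symm_n$-equivariant filtration of $\kk\freelrb_n$ indexed by the length $\ell$ of an injective word: letting $W_\ell$ denote the span of injective words of length $\le \ell$, one gets $\symm_n$-stable subspaces whose subquotients $W_\ell/W_{\ell-1}$ are the permutation modules on ordered $\ell$-tuples of distinct letters, i.e. $\mathrm{Ind}_{\symm_{n-\ell}}^{\symm_n}(\triv_{n-\ell})$ regarded with the $\symm_\ell$-action permuting positions still present. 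The key structural input is Brown's/BHR-style analysis: the operator $x$ (the averaged random-to-top-type element generating $R^G$) acts triangularly with respect to this filtration, so its spectrum and eigenspace characters can be computed subquotient by subquotient. On the subquotient indexed by $\ell$, $x$ should act with eigenvalues $j=0,1,\ldots,\ell$, and the multiplicity spaces are governed by the decomposition of the regular-representation-like piece coming from random-to-top on $\ell$ letters.

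Next I would invoke the known spectral decomposition of random-to-top shuffling: the $j$-eigenspace of the random-to-top operator on $\kk\symm_\ell$ has Frobenius characteristic $h_j\cdot\derangement_{\ell-j}$ (this is essentially the Uyemura-Reyes / D\'esarm\'enien–Wachs computation referenced in Section~\ref{sec:derangement}, and should be recalled there). Combining this with the induction product coming from the $\symm_{n-\ell}$-factor acting trivially on the "absent" letters contributes the extra $h_{n-\ell}$ factor, and summing over all $\ell$ from $j$ to $n$ — since an injective word of length $\ell$ can only support eigenvalue $j\le\ell$ — yields exactly $\sum_{\ell=j}^n h_{n-\ell}\,h_j\,\derangement_{\ell-j}$. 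To finish, one must check that the filtration actually splits as $\symm_n$-modules in each eigenvalue (automatic since $\kk G$ is semisimple under the characteristic hypothesis, so the triangular action of $x$ is in fact diagonalizable and the eigenspaces are genuine direct summands), giving the first equality.

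For the $q$-analogue I would run the structurally identical argument on $\kk\qfreelrb_n$: build the analogous flag-length filtration, check that $x^{(q)}$ acts triangularly with eigenvalues $[j]_q$, and identify the subquotients with the $GL_n$-analogues of the permutation modules (parabolic inductions of trivial representations), whose unipotent-character content under $\ch_q$ matches the symmetric-group story because $\ch_q(\triv_{GL_m})=h_m$ and $\ch_q$ is a ring isomorphism. The role of "random-to-top on $\symm_\ell$" is played by Brown's $q$-random walk, whose eigenspace Frobenius data is again $h_j\cdot\derangement_{\ell-j}$ on the level of symmetric functions — this is precisely the content that makes the two sides of the displayed equation literally equal symmetric functions rather than merely analogous.

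The main obstacle I anticipate is the middle step: proving that the eigenspace of the relevant shuffling operator on the "full-length" piece has Frobenius characteristic exactly $h_j\cdot\derangement_{\ell-j}$, and doing so uniformly enough that the $q$-version falls out with no extra work. One has to be careful that the operator $x$ (resp. $x^{(q)}$) really is conjugate, in the semisimple algebra, to the random-to-top operator tensored with a trivial block — i.e. that the LRB/Tits-algebra combinatorics of $\freelrb_n$ genuinely reduces the eigenspace computation to random-to-top on a smaller symmetric group — and in the $q$-setting that Brown's walk on $\qfreelrb_n$ has the matching spectral package. Once that reduction is secured, the rest is the bookkeeping of induction products and the definition of $\derangement_n$ recalled in Section~\ref{sec:derangement}.
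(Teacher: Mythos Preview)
Your overall strategy matches the paper's almost exactly: filter by word/flag length, observe that on each filtration factor the action of $x$ (resp.\ $x^{(q)}$) reduces to the chamber-space random-to-top operator on a smaller alphabet, apply the chamber-space eigenspace result $\ch=h_j\cdot\derangement_{\ell-j}$, and then induce up with a trivial factor $h_{n-\ell}$. Semisimplicity lets you read off genuine eigenspaces from the associated graded. This is precisely the architecture of the paper's proof of Theorem~\ref{precise-version-of-semigroupeigen}.

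There is one concrete slip you should fix. Your filtration $W_\ell=\{\text{words of length }\le\ell\}$ is $\symm_n$-stable but is \emph{not} stable under left multiplication by $x$: multiplying a length-$\ell$ word by a new letter produces a word of length $\ell+1$. So $x$ does not act on $W_\ell/W_{\ell-1}$ in the way you need. The paper instead uses the decreasing filtration $\kk\freelrb_{n,\ge\ell}=\{\text{words of length }\ge\ell\}$, which \emph{is} a $\kk\freelrb_n$-submodule (and likewise $\kk\qfreelrb_{n,\ge\ell}$). On the factor $\kk\freelrb_{n,\ge\ell}/\kk\freelrb_{n,\ge\ell+1}$ one then decomposes by the support $U$ of the word and checks (Proposition~\ref{summands-as-submodules-prop}) that $x$ acts on the $U$-piece exactly as $x_U$, i.e.\ as random-to-top on $\kk\symm_U\cong\kk\symm_\ell$. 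That is the precise mechanism behind your informal ``conjugate to random-to-top tensored with a trivial block.''

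One further comment on what you flag as the main obstacle: the chamber-space identity $\ch\ker((x-j)|_{\kk\freelrb_{\ell,\ell}})=h_j\cdot\derangement_{\ell-j}$ and its $q$-analogue are not merely cited but are proven in the paper (Theorem~\ref{chamber-eigenspaces-thm}) via an explicit construction of $j$-eigenvectors from nullvectors on $\ell-j$ letters using the maps $\Psi_U,\Psi_U^{(q)}$ (Propositions~\ref{prop:operator}, \ref{q-operator-proposition}, Lemma~\ref{Wachs-lemma}), together with Brown's $q$-derangement-number dimension count. The $q$-case in particular is new, so you cannot simply invoke it; you would need to supply exactly this argument.
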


Our proofs employ techniques that go back to discussion between Michelle Wachs and the third author in the analysis of random-to-top shuffling, and have been employed more recently by  Dieker and Saliola \cite{dieker2018spectral} and
Lafreni\'ere \cite{Lafreniere} in the analysis of
random-to-random shuffling and a generalization.  The method constructs eigenvectors of $x, x^{(q)}$ acting on $\freelrb_n, \qfreelrb_n$ from nullvectors associated to the analogous operators for smaller values of $n$. Combining these ideas with various filtrations on $\kk M$ allows us to describe the eigenspaces as parabolic inductions of derangement representations in a conceptual way, avoiding character computations. 

The remainder of the paper proceeds as follows.
\begin{itemize}
    \item Section~\ref{invariant-subalgebra-section} introduces the monoid algebras of interest, $R = \kk \freelrb_n, \kk \qfreelrb_n$ and proves Theorem~\ref{paraphrased-semisimplicity-thm} describing in parallel the invariant subalgebras $R^G$ for $G = \symm_n, GL_n$.
    \item Section~\ref{representations-review-section} reviews the relation between symmetric functions, representations of $\symm_n$ and unipotent representations of $GL_n$. It also introduces the derangement symmetric functions $\derangement_n$, and describes some of their many definitions and guises. 
\item
Section~\ref{main-results-section} proves Theorem~\ref{thm:semigroupeigen}, simultaneously decomposing the monoid algebra $R$ into simple modules for $R^G$ and $\kk G$, with arguments in parallel for $R=\kk\freelrb_n$ and $R=\kk\qfreelrb_n$.
\end{itemize}

\section*{Acknowledgements}
The authors are very grateful to Darij Grinberg for helpful references and conversations, to Franco Saliola for useful discussions, and to Peter Webb for organizing a reading seminar on Benjamin Steinberg's text \cite{Steinberg} that helped spark this project.  The third author thanks Michelle Wachs for helpful discussions on random-to-top shuffling. First and second authors are supported by NSF Graduate Research Fellowships,
and third author by NSF grant DMS-2053288.
\section{Definitions, background and the answer to Question~\ref{first-main-invariant-theory-question}}
\label{invariant-subalgebra-section}

We introduce the monoids $M=\freelrb_n, \qfreelrb_n$,
the symmetries $G=\symm_n, GL_n$ of the monoid algebras $R=\kk M$, and analyze the invariant rings $R^G$. Useful references are Brown \cite{BrownOnLRBs},  B. Steinberg \cite{Steinberg}.

\subsection{The monoids $\freelrb_n$ and  $\qfreelrb_n$}

\begin{definition} 
\label{defn:freeLRB} \rm
The {\it free left-regular band (or LRB)} on $n$ letters $\freelrb_n$ (see 
\cite[\S1.3]{BrownOnLRBs}, \cite[\S14.3.1]{Steinberg}) consists, as a set, of all words
$\aa=(a_1,a_2,\ldots,a_\ell)$ with letters $a_i$
from $\{1,2,\ldots,n\}$ and no repeated letters,
that is, $a_i \neq a_j$ for $1 \leq i<j \leq n$.
Here the {\it length} $\ell(\aa):=\ell$ lies
anywhere in the range $0 \leq \ell \leq n$.
The set $\freelrb_n$ becomes a semigroup under the following operation:
if $\bb=(b_1,\ldots,b_m)$ is another word in
$\freelrb_n$, then their product is
$$
\aa \cdot \bb:=(a_1,\ldots,a_\ell,b_1,\ldots,b_m)^{\wedge},
$$
where we have borrowed the notation from Brown \cite{BrownOnLRBs} that for a sequence $\cc=(c_1,\ldots,c_p)$, the subsequence
$\cc^\wedge=(c_1,\ldots,c_p)^\wedge$
is obtained by removing any letter 
$c_i$ that appears already in the prefix $(c_1,c_2,\ldots,c_{i-1})$.
One can check that the empty word $()$ is an identity element for
this operation, and hence $\freelrb_n$ is not only a semigroup, but a {\it monoid}.
\end{definition}

\begin{definition} 
\label{defn:q-freeLRB}
\rm
The $q$-analogue of $\freelrb_n$ that we will consider will be denoted
$\qfreelrb_n$.  As a set, it consists
of all partial flags of subspaces
$\AAA=(A_1,A_2,\ldots,A_\ell)$ where $A_i$ is an $i$-dimensional $\F_q$-linear subspace of $(\F_q)^n$, and
$A_1 \subset A_2 \subset \cdots \subset A_\ell$.
Again the length $\ell(\AAA):=\ell$ lies in the range $0 \leq \ell \leq n$.  The set $\qfreelrb_n$
becomes a semigroup under the following operation:
if $\BBB=(B_1,\ldots,B_m)$ is another such flag
in $\qfreelrb_n$, then
$$
\AAA \cdot \BBB:=
(A_1,\ldots,A_\ell,A_\ell+B_1,A_\ell+B_2,\ldots,A_\ell+B_m)^\wedge
$$
using a similar notation as before:  for a sequence $\CCC=(C_1,\ldots,C_p)$ of nested
subspaces $C_1 \subseteq C_2 \subseteq \cdots \subseteq C_p$, the subsequence $\CCC^\wedge$
is obtained by
by removing any subspace $C_i$ that appears already
in the prefix $(C_1,C_2,\ldots,C_{i-1})$.
As above, $\qfreelrb_n$ is not only a semigroup but a {\it monoid} since the empty flag $()$ is an identity element.
\end{definition}

\begin{remark}
\label{three-q-analogues-warning-remark}
{\bf Warning:} 
Brown \cite[\S1.4, \S5]{BrownOnLRBs}
introduced two other monoids
$\freelrb_{n,q}$ and $\overline{\freelrb}_{n,q}$,
closely related to $\qfreelrb_n$.  All three are different
$q$-analogues of $\freelrb_n$, related as follows.

As a set, Brown's first $q$-analogue $\freelrb_{n,q}$ consists of all sequences $\vv=(v_1,v_2,\ldots,v_\ell)$ of linearly independent vectors in $(\F_q)^n$.  For another
sequnce $\vv'=(v'_1,v'_2,\ldots,v'_m)$, one defines their product  
$$
\vv \cdot \vv':=(v_1,v_2,\ldots,v_\ell,v_1',v_2'\ldots,v_m')^\wedge
$$
where $(u_1,\ldots,u_p)^\wedge$ is obtained
by removing any $u_i$ which is dependent upon the preceding vectors $(u_1,\ldots,u_{i-1})$.
One may regard the monoid $\qfreelrb_n$
as a quotient monoid of $\freelrb_{n,q}$
via the surjection
$$
\begin{array}{rcl}
\freelrb_{n,q} & \twoheadrightarrow &
\qfreelrb_n\\
(v_1,v_2,\ldots,v_\ell) & \longmapsto & (A_1,A_2,\ldots,A_\ell)  \text{ where }A_i:=\F_q v_1 + \F_q v_2 + \cdots + \F_q v_i.
\end{array}
$$

Brown's second $q$-analogue $\overline{\freelrb}_{n,q}$
turns out to be a further quotient of
either $\freelrb_{n,q}$ or $\qfreelrb_n$, whose motivation he explains in \cite[\S 5.1,5.2]{BrownOnLRBs}.  It is $q$-analogous to a certain quotient monoid of $\freelrb_n$ that he
denotes $\overline{\freelrb}_n$, which one could define as follows:  
the monoid quotient map $\freelrb_n \twoheadrightarrow \overline{\freelrb}_n$ identifies the longest
words, those of length $n$, with their prefix word of length $n-1$:
$$
\overline{(a_1,a_2,\ldots,a_{n-1},a_n)}=
\overline{(a_1,a_2,\ldots,a_{n-1})}.
$$
One can then define Brown's second $q$-analogue
$\overline{\freelrb}_{n,q}$ as a quotient 
of $\qfreelrb_n$, where
the monoid quotient map $\qfreelrb_n \twoheadrightarrow \overline{\freelrb}_{n,q}$ identifies a
complete flag of length $n$ with the flag of length $n-1$ that omits the (improper) subspace $(\F_q)^n$ at the end:
$$
\overline{(A_1, A_2,\cdots,A_{n-1},(\F_q)^n)}=
\overline{(A_1,A_2,\ldots,A_{n-1})}.
$$
\end{remark}

\subsection{Symmetries of the monoid algebras}

Let $\kk$ be a commutative ring with $1$.  For any finite monoid $M$ (such as 
$M=\freelrb_n, \qfreelrb_n$), 
the {\it monoid algebra} $R=\kk M$ is
the free $\kk$-module with basis elements given by
the elements $\aa$ of $M$, and multiplication extended
$\kk$-linearly from the monoid operation on the basis elements:
$$
\left(
\sum_\aa p_\aa  \,\, \aa
\right)
\left(
\sum_\bb  q_\bb\,\, \bb
\right)
=
\sum_{\aa,\bb}
p_\aa q_\bb  \,\, \aa \cdot \bb 
=
\sum_\cc \left( \sum_{\aa \cdot \bb=\cc} p_\aa q_\bb  \right) \cc.
$$
Note that any group $G$ of monoid automorphisms of $M$ acts as ring automorphisms on $R=\kk M$.  In particular, the symmetric group $\symm_n$ permuting letters $\{1,2,\ldots,n\}$ acts on $\freelrb_n$ via
$$
w(a_1,\ldots,a_\ell)=(w(a_1),\ldots,w(a_\ell)).
$$
Similarly, the finite general linear group $GL_n:=GL_n(\F_q)$  acts
on $\qfreelrb_n$ by 
$$
g(A_1,\ldots,A_\ell)=(g(A_1),\ldots,g(A_\ell)).
$$
Our first goal is to analyze the $G$-invariant subalgebras $R^G$ in both cases.

\subsection{The invariant subalgebras $R^G$
and Question~\ref{first-main-invariant-theory-question}}

Since the groups $G$ permute the monoid elements $M$, the monoid
algebra $R=\kk M$ becomes a permutation representation of $G$.
Therefore the invariant subalgebra $R^G$ has as a $\kk$-basis
the orbit sums $\{ \sum_{\aa \in \mathcal{O}} \aa\}$
as one runs through all $G$-orbits $\mathcal{O}$ on $M$.
For both monoids $M=\freelrb_n,\qfreelrb_n$, one can easily identify the $G$-orbits, since the groups $G=\symm_n,GL_n$ act transitively on the subsets 
$$
\begin{aligned}
\freelrb_{n,\ell}&:=\{\aa \in \freelrb_n: \ell(\aa)=\ell\},\\
\qfreelrb_{n,\ell}&:=\{\AAA \in \qfreelrb_n: \ell(\AAA)=\ell\}.
\end{aligned}
$$
Thus the $G$-invariant
subalgebras $R^G$ have  
$\kk$-bases 
$\{x_\ell\}_{\ell=0,1,\ldots,n},$ and $
\{x^{(q)}_\ell\}_{\ell=0,1,\ldots,n}$,
 defined by
\begin{equation}
\label{orbit-sum-bases}
\begin{aligned}
x_\ell&:=\sum_{\aa \in \freelrb_{n,\ell}} \aa,\\
x^{(q)}_\ell&:=\sum_{\AAA \in \qfreelrb_{n,\ell}} \AAA. 
\end{aligned}
\end{equation}
\begin{example}
Let $q = 2$, $n=3$, $\ell = 1$,
and let $e_1,e_2,e_3$ be standard basis vectors for $V=(\F_2)^3$. Using the notation $\langle v_1,v_2,\ldots,v_m\rangle$ for the $\F_q$-span
of the vectors $\{v_1,v_2,\ldots,v_m\}$ in $V$,
one has 
\[
\begin{aligned} x^{(2)}_{1} =& (\langle e_1 \rangle ) + (\langle e_2 \rangle ) + (\langle e_3 \rangle ) + (\langle e_1 + e_2 \rangle )+ (\langle e_1 + e_3 \rangle ) + (\langle e_2 + e_3 \rangle ) + (\langle e_1 + e_2 + e_3 \rangle ).
\end{aligned} \]
\end{example}

It will be convenient to adopt the convention that $x_{n+1}:=0=:x^{(q)}_{n+1}$.

Using the $\kk$-bases in \eqref{orbit-sum-bases} for $(\kk\freelrb_n)^{\symm_n}$ and $(\kk \qfreelrb_n)^{GL_n}$, there is a simple
rule for multiplication by the elements
$$
\begin{array}{rccclcl}
x&:=&x_1&=&\sum_{i=1}^n (i) &= (1)+(2)+\cdots+(n),\\
x^{(q)}&:=&x^{(q)}_1&=&\sum_{\text{lines }L \subset (\F_q)^n} (L).&
\end{array}
$$
To state the rule, recall a standard $q$-analogue of nonnegative integers
$$
[n]_q:=1+q+q^2+\cdots+q^{n-1}.
$$

\begin{lemma}
\label{x-multiplication}
Inside $R^G$ for the monoid algebras
$R=\kk M$ with $M=\freelrb_n, \qfreelrb_n$, the
elements $x$ and $x^{(q)}$ act on the (ordered) $\kk$-bases 
\eqref{orbit-sum-bases}
as follows: for $\ell=0,1,\ldots,n$,
$$
\begin{aligned}
x \cdot x_\ell&= \ell x_{\ell} + x_{\ell+1},\\
x^{(q)} \cdot x^{(q)}_\ell&= [\ell]_q \, x_{\ell}^{(q)} + q^\ell \, x_{\ell+1}^{(q)}.
\end{aligned}
$$
In other words, $x$ and $x^{(q)}$ act on $R^G$, in the ordered bases above, via the matrices:
$$
x=\left[
\begin{matrix}
0&  &  &  &       &   &\\
1& 1&  &  &       &   &\\
 & 1& 2&  &       &   &\\
 &  & 1& \ddots     &   &\\
 &  &  &  &n-1&\\
 &  &  &       &  1&n
\end{matrix}
\right]
\qquad 
x^{(q)}=\left[
\begin{matrix}
[0]_q&  &  &  &       &   &\\
q^0& [1]_q&  &  &       &   &\\
 & q^1&  [2]_q&  &       &   &\\
 &  & q^2& \ddots     &   &\\
 &  &  &  &[n - 1]_q &\\
 &  &  &       &  q^{n - 1}& [n]_q
\end{matrix}
\right].
$$
\end{lemma}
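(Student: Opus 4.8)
The plan is to compute the products $x \cdot x_\ell$ and $x^{(q)} \cdot x^{(q)}_\ell$ directly from the monoid operations in Definitions~\ref{defn:freeLRB} and \ref{defn:q-freeLRB}, by tracking what happens when a single letter (respectively a single line) is prepended to a word of length $\ell$ (respectively a flag of length $\ell$). First I would handle the case $M = \freelrb_n$. Expanding $x \cdot x_\ell = \sum_{i=1}^n \sum_{\aa} (i)\cdot \aa$ where $\aa$ ranges over $\freelrb_{n,\ell}$, the product $(i) \cdot \aa$ equals $(i, a_1, \ldots, a_\ell)^\wedge$. There are two cases: if $i$ already appears among $a_1, \ldots, a_\ell$, the hat operation deletes it and we recover $\aa$ itself (length $\ell$); if $i$ does not appear, we get the word $(i, a_1, \ldots, a_\ell)$ of length $\ell+1$. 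So each $\aa \in \freelrb_{n,\ell}$ is produced $\ell$ times in the first case (once for each of its $\ell$ letters $i$), contributing $\ell\, x_\ell$; and each word of length $\ell+1$ arises exactly once this way (from its own first letter prepended to its length-$\ell$ suffix), contributing $x_{\ell+1}$. This gives $x \cdot x_\ell = \ell\, x_\ell + x_{\ell+1}$, which matches the claimed subdiagonal matrix, using the convention $x_{n+1} = 0$ for the top corner.

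Next I would do the parallel computation for $M = \qfreelrb_n$. Here $x^{(q)} \cdot x^{(q)}_\ell = \sum_L \sum_{\AAA} (L) \cdot \AAA$, where $L$ ranges over the lines in $(\F_q)^n$ and $\AAA = (A_1, \ldots, A_\ell)$ over $\qfreelrb_{n,\ell}$. By the product rule, $(L) \cdot \AAA = (L, L + A_1, L + A_2, \ldots, L + A_\ell)^\wedge$. The key observation is the dichotomy: if $L \subseteq A_1$ (equivalently $L = A_1$, since both are lines), then $L + A_i = A_i$ for all $i$, the hat operation collapses the repeats, and the product is just $\AAA$. If $L \not\subseteq A_1$, then $L + A_1$ has dimension $2$, and in general $L + A_i$ has dimension $i+1$ for every $i$; none of the spaces $L, L+A_1, \ldots, L+A_\ell$ coincide with a predecessor, so the product is the flag $(L, L+A_1, \ldots, L+A_\ell)$ of length $\ell+1$. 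Counting the first case: for fixed $\AAA$, the lines $L$ with $L = A_1$ number exactly $[\ell]_q$ when $\ell \geq 1$ --- wait, more carefully, $L$ must be a line contained in $A_\ell$ for $(L)\cdot\AAA$ to return $\AAA$, which requires $L \subseteq A_\ell$ and then the nesting forces the product to collapse to $\AAA$ precisely when $L \subseteq A_1$; so the count of such $L$ is the number of lines in $A_1$, which is $1$. I need to recount: the coefficient of $x_\ell^{(q)}$ should be $[\ell]_q$, so the case that returns a length-$\ell$ flag must be $L \subseteq A_\ell$ (giving $[\ell]_q$ choices of line in the $\ell$-dimensional $A_\ell$), and I must verify that $(L)\cdot\AAA = \AAA$ for all such $L$, not just $L \subseteq A_1$. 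I would check this by noting that when $L \subseteq A_\ell$, say $L \subseteq A_j \setminus A_{j-1}$ minimally (so $L \not\subseteq A_{j-1}$ but $L \subseteq A_j$), then $L + A_i = A_i$ for $i \geq j$ and $L + A_i$ has dimension $i+1$ for $i < j$, so the product flag is $(L, L+A_1, \ldots, L+A_{j-1}, A_j, A_{j+1}, \ldots, A_\ell)^\wedge$, which has length $\ell$ but is a \emph{different} flag from $\AAA$ unless $j = 1$. So that interpretation fails too, and I would instead argue: the $q$-Stirling recursion and the total count $\sum_\ell |\qfreelrb_{n,\ell}|$ must be matched, and the honest statement is that the coefficient $[\ell]_q$ arises from the lines $L \subseteq A_1$ contributing $x_\ell$ together with lines $L$ with $\dim(L+A_1) = 2$ but $L + A_i$ colliding higher up contributing lower-length terms --- this suggests the product rule genuinely produces only lengths $\ell$ and $\ell+1$, and I should recheck the product formula. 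Re-examining: if $L \not\subseteq A_1$ then $L + A_1 \supsetneq A_1$ has dimension $2$, and since $A_1 \subseteq A_2$, we get $L + A_1 \subseteq L + A_2$; the dimension of $L + A_i$ is either $i$ or $i+1$, and it is $i+1$ for $i=1$; once it becomes $i+1$ it stays $i+1$ (since $L+A_i \subseteq L+A_{i+1}$ and $\dim(L+A_{i+1}) \leq i+2$, but also $\geq \dim(L+A_i) = i+1$, and it equals $i+1$ iff $A_{i+1} \subseteq L + A_i$). So collisions with predecessors can still happen mid-flag. The correct bookkeeping, which I would carry out carefully, is: $L$ contributes to $x_\ell^{(q)}$ precisely when $L \subseteq A_1$ (there is $1$ such line) \emph{or} when the hat operation deletes exactly one space because some $A_{i+1} \subseteq L + A_i$; and one shows the total multiplicity of $x_\ell^{(q)}$ in $x^{(q)} \cdot x_\ell^{(q)}$ is $[\ell]_q$ and of $x_{\ell+1}^{(q)}$ is $q^\ell$.

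The main obstacle, then, is precisely this $q$-case bookkeeping: correctly identifying, for each line $L$ and each flag $\AAA \in \qfreelrb_{n,\ell}$, the length of $(L)\cdot\AAA$ and which flag it equals, and then summing over all $L$ to extract the coefficients $[\ell]_q$ and $q^\ell$. The cleanest route I would pursue is to fix a flag $\BBB = (B_1, \ldots, B_m)$ on the right-hand side (of length $m = \ell$ or $m = \ell+1$) and count the pairs $(L, \AAA)$ with $\AAA \in \qfreelrb_{n,\ell}$ and $(L) \cdot \AAA = \BBB$: when $m = \ell + 1$, one must have $L = B_1$ and $\AAA = (B_2 \cap H, \ldots)$ --- actually $\AAA$ is forced to be a flag whose $i$-th term $A_i$ satisfies $B_1 + A_i = B_{i+1}$, i.e. $A_i$ is a hyperplane-section of $B_{i+1}$ not containing $B_1$, and the number of such chains is $q^\ell$ (at each step, $q$ choices of complement, compatibly nested); when $m = \ell$, either $\AAA = \BBB$ with $L$ any of the lines forcing collapse --- and a careful count gives $[\ell]_q$ total. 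I would verify the identity $[\ell]_q + q^\ell = [\ell+1]_q$ is consistent with column sums (each column of $x^{(q)}$ sums to $[\ell+1]_q = |\{\text{lines in a fixed }A_{\ell+1}\}|$, the number of lines $L$, as a sanity check, since $x^{(q)}$ has exactly $[n]_q$... no, $x^{(q)} = x_1^{(q)}$ has $[n]_q$ lines total, so column sums should be $[n]_q$ --- wait, that is wrong too; let me just say: I would cross-check column sums against $|\freelrb_{n,1}| = n$ and its $q$-analogue). Once the two coefficient counts $[\ell]_q$ and $q^\ell$ are established, reading off the bidiagonal matrices is immediate, and the $\symm_n$ case serves as the combinatorial skeleton that the $GL_n$ case $q$-deforms.
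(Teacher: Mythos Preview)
Your $\freelrb_n$ argument contains a genuine error that you never catch. You claim that when $i$ already appears in $\aa=(a_1,\ldots,a_\ell)$, the product $(i)\cdot\aa$ equals $\aa$. This is false: the hat operation removes the \emph{later} occurrence of $i$, so $(i)\cdot\aa=(i,a_1,\ldots,\widehat{a_j},\ldots,a_\ell)$ where $a_j=i$, and this equals $\aa$ only when $j=1$. For instance $(2)\cdot(1,2,3)=(2,1,3)\neq(1,2,3)$. Your count ``$\ell$ copies of $\aa$'' therefore does not come from the pairs $(i,\aa)$ with $i\in\aa$; those pairs produce $\ell$ \emph{different} length-$\ell$ words. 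The final coefficient $\ell$ is still correct, but only because the double sum over all $(i,\aa)$ is $\symm_n$-invariant, so each length-$\ell$ word must appear with the same multiplicity---a step your argument skips.

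Ironically, you detect exactly this phenomenon in the $q$-case (``which has length $\ell$ but is a \emph{different} flag from $\AAA$ unless $j=1$''), flail for a while, and then land on the correct strategy: fix the target $\BBB$ and count preimage pairs $(L,\AAA)$. That is precisely the paper's method, and it works uniformly in both cases. For the target of length $\ell+1$ one has $L=B_1$ forced, and the number of compatible $\AAA$ is $q^\ell$ by choosing each $A_i$ sequentially as a hyperplane of $B_{i+1}$ not containing $B_1$ (the paper phrases this as $q$ choices of a line in a $2$-dimensional quotient). For the target $\BBB=(B_1,\ldots,B_\ell)$ of length $\ell$, again $L=B_1$ is forced; letting $i_0$ be the least index with $B_1\subseteq A_{i_0}$, one finds $A_i=B_i$ for $i\geq i_0$ and $q^{i_0-1}$ choices for $(A_1,\ldots,A_{i_0-1})$, summing over $i_0=1,\ldots,\ell$ to $[\ell]_q$. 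You gesture at this but stop at ``a careful count gives $[\ell]_q$''; that count is the entire content of the lemma in the $q$-case, so you need to actually carry it out. You should also go back and redo the $\freelrb_n$ coefficient $c=\ell$ by the same target-counting method, rather than leave the incorrect ``$(i)\cdot\aa=\aa$'' justification standing.
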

\begin{proof}
Note that the product $x \cdot x_\ell$ is $G$-invariant,
and is a sum of terms $\aa$ of length $\ell$ or $\ell+1$,
so it must have the form $c \cdot x_\ell+ d \cdot x_{\ell+1}$
for some constants $c,d$ in $\kk$. 
The constant $d=1$ since any word $\aa=(a_1,a_2,\ldots,a_{\ell+1})$
of length $\ell+1$
arises uniquely as $(a_1) \cdot (a_2,\ldots,a_{\ell+1})$.
The constant $c=\ell$ since 
any word $(a_1,a_2,\ldots,a_{\ell})$
of length $\ell$ arises in $\ell$ ways,
from these products:
$$
\begin{aligned}
(a_1) & \cdot (\underline{a_1},a_2,a_3,a_4,\ldots,a_{\ell}),\\
(a_1) & \cdot (a_2,\underline{a_1},a_3,a_4,\ldots,a_{\ell}),\\
(a_1) & \cdot (a_2,a_3,\underline{a_1},a_4,\ldots,a_{\ell}),\\
&\vdots
\\
(a_1) & \cdot (a_2,a_3,a_4,\ldots,a_{\ell},\underline{a_1})
\end{aligned}. 
$$

For the $q$-analogous formula, one argues similarly that
$$
x^{(q)} \cdot x^{(q)}_\ell=c \cdot x^{(q)}_\ell+ d \cdot x^{(q)}_{\ell+1}
$$
for some constants $c,d$ in $\kk$.  We first show that the constant $d=q^\ell$. Any flag $\AAA=(A_1,A_2,\ldots,A_{\ell+1})$
of length $\ell+1$
arises from products of the form
$(A_1) \cdot (B_1,B_2\ldots,B_{\ell})$
where the flag $B_1 \subset B_2 \subset \cdots
\subset B_{\ell}$ satisfies $A_1+B_i=A_{i+1}$
for $i=1,2,\ldots,\ell$.  If one picks $B_1,B_2,\cdots,B_{\ell}$
sequentially, then having chosen $B_{i-1}$, one must choose $B_i$ so that $B_i/B_{i-1}$ is any line
inside the $2$-dimensional quotient space $A_{i+1}/B_{i-1}$ other than the line $(A_1+B_{i-1})/B_{i-1}$.  Since there are $q+1$ lines
in $A_{i+1}/B_{i-1}$, this gives $q$ choices for $B_i$,
and $q^\ell$ sequential choices in total for $B_1,B_2,\cdots,B_{\ell}$.

We next argue that the constant $c = [\ell]_q$.
Any flag $\AAA=(A_1,A_2,\ldots,A_{\ell})$
of length $\ell$
arises from products of the form
$(A_1) \cdot (B_1,B_2\ldots,B_{\ell})$
in which the flag $B_1 \subset B_2 \subset \cdots
\subset B_{\ell}$ has $A_1 \subseteq B_\ell$ (else, $(A_1) \cdot (B_1,B_2\ldots,B_{\ell})$ has length $\ell+1$, not $\ell$).  Letting $i_0$ be the smallest index for which $A_1 \subseteq B_{i_0}$, one finds that $1 \leq i_0 \leq \ell$.  Having fixed $i_0$,
the $B_i$ for $i$
in the range $i_0 \leq i \leq \ell$ 
are completely determined by 
$B_i = A_1+B_i=A_{i}$.
Meanwhile, for $i$ in the range $1 \leq i \leq i_0-1$, as in the argument for the constant $d=q^\ell$ above, one can sequentially choose each of $B_1,B_2,\ldots,B_{i_0-1}$ in $q$ ways so that they satisfy $A_1+B_i=A_{i+1}$.  This gives $q^{i_0-1}$ choices, which when summed over $i_0=1,2,\ldots,\ell$ gives $1+q+q^2+\cdots+q^{\ell-1}=[\ell]_q$ sequential choices in total.
\end{proof}

Lemma~\ref{x-multiplication} allows us to connect $R^{G}$ to the {\it Stirling} and {\it $q$-Stirling numbers}, briefly reviewed here.

\begin{definition}\rm
The classical {\it Stirling numbers of the second kind} 
$
(S(n,k))_{k,n=0,1,\ldots}
$
have two closely related families of $q$-analogues $S_q(n,k), \tilde{S}_q(n,k)$,
introduced by Carlitz \cite[\S4]{Carlitz} and
studied by many others, e.g., Cai, Ehrenborg and Readdy \cite{CaiReaddy}, Garsia and Remmel \cite{GarsiaRemmel}, Gould \cite{Gould}, deM\'edicis and Leroux \cite{deMedicisLeroux}, Milne \cite{Milne1, Milne2}, Sagan and Swanson \cite{SaganSwanson}, Wachs and White \cite{WachsWhite}, among others.
Using the notation\footnote{Notational conflicts are unavoidable. E.g., our $S_q(n,k), \tilde{S}_q(n,k)$ here
equal $\overline{S}[n,k], S[n,k]$,
respectively, in \cite{SaganSwanson}.} in \cite{Milne1}, 
all three are doubly-indexed triangles defined for $(n,k)$ with $n,k \geq 0$, having initial conditions that set them all equal to $1$ when $(n,k)=(0,0)$, and vanishing whenever $n+k \geq 1$ but either $k=0$ or $n=0$.  When both $n,k \geq 1$, they are then defined by these recursions:
\begin{equation}
\label{Stirling-recurrences}
\begin{array}{rccccl}
S(n,k)&=&S(n-1,k-1) &+& k \cdot S(n-1,k),\\
\tilde{S}_q(n,k)&=&\tilde{S}_q(n-1,k-1) &+& [k]_q \cdot \tilde{S}_q(n-1,k),\\
S_q(n,k)&=&q^{k-1}\cdot S_q(n-1,k-1) &+& [k]_q \cdot S_q(n-1,k).
\end{array}
\end{equation}
An easy induction using the recursion lets one check that, for all $n$ and $k$ one has the relation
$$
S_q(n,k)=q^{\binom{k}{2}} \tilde{S}_q(n,k),
$$
and for $n \geq 1$ one has
\begin{equation}
\label{boundary-q-Stirlings}
\begin{aligned}
S(n,1)&=S_q(n,1)=\tilde{S}_q(n,1)=1,\\
S(n,n)&=\tilde{S}_q(n,n)=1,\quad
S_q(n,n)=q^{\binom{n}{2}}.
\end{aligned}
\end{equation}
\end{definition}

\begin{remark}
Alternatively, one can consider $S(n,k), \tilde{S}_q(n,k), S_q(n,k)$
as change-of-basis matrices in the polynomial rings $\kk[t]$ with coefficients $\kk=\Z,\Z[q],\Z[q,q^{-1}]$, respectively. Consider the obvious ordered $\kk$-basis of $\kk[t]$ given by the powers $(t^n)_{n=0}^\infty=(1,t,t^2,\ldots)$,
versus these {\it $(q-)$falling factorial}
$\kk$-bases
$$
\begin{array}{rll}
(t)_n&:=t(t-1)(t-2)\cdots(t-(n-1))
 &\text{ in  }\Z[t],\\
(t)_{n,q}&:=t(t-[1]_q)(t-[2]_q)\cdots(t-[n-1]_q) &\text{ in  }\Z[q][t] \text{ or } \Z[q,q^{-1}][t].
\end{array}
$$
Then one has these change-of-basis formulas (see\footnote{The formulas as discussed by Milne in \cite[(1.14)]{Milne1} use the notation $[x]=\frac{y-1}{q-1}$ where $y=q^x$ is regarded as an indeterminate. To concord with notation and \eqref{Stirlings-as-change-of-bases} here, one should substitute $t=[x]=\frac{y-1}{q-1}$, so that $y=1+t(q-1)$.} Gould \cite[\S3]{Gould}, Milne
\cite[(1.14)]{Milne1}, \cite[(I.17)]{GarsiaRemmel}):

\begin{equation}
\label{Stirlings-as-change-of-bases}
\begin{array}{rll}
t^n&=\sum_k S(n,k) \cdot (t)_k &\text{ in }\Z[t],\\
t^n &=\sum_k \tilde{S}_q(n,k) \cdot (t)_{k,q} &\text{ in }\Z[q][t],\\
     &=\sum_k  S_q(n,k) q^{-\binom{k}{2}} \cdot (t)_{k,q} &\text{ in }\Z[q,q^{-1}][t].\\
\end{array}
\end{equation}
\end{remark}

We next show that $S(n,k), S_q(n,k)$ also mediate a natural change-of-basis within $R^G$.

\begin{cor}
\label{Stirling-corollary}
Let $\kk$ be a commutative ring with $1$,
and let $R=\kk M$ with $M=\freelrb_n$ or $\qfreelrb_n$.
Then the ($q$-)Stirling numbers $S(m,k), S_q(m,k)$ are the expansion
coefficients for the powers 
$\{ x^m \}_{m=0,1,\ldots,n}$ and $\{ (x^{(q)})^m \}_{m=0,1,\ldots,n}$ in the orbit-sum $\kk$-bases
$\{ x_k \}_{k=0,1,\ldots,n}, \{ x^{(q)}_k\}_{k=0,1,\ldots,n}$ of $R^G$:

$$
\begin{aligned}
x^m&=\sum_k S(m,k)\, x_k,\\
(x^{(q)})^m&=\sum_k S_q(m,k)\, x^{(q)}_k.
\end{aligned}
$$
Thus unitriangularity of $ \left \{S(m,k)\right\}$
shows $ \{ x^k \}_{k=0,1,\ldots,n}$
always gives a $\kk$-basis for $R^G$, while
triangularity of $\left \{S_q(m,k)\right\}$ shows 
$ \{ (x^{(q)})^k \}_{k=0,1,\ldots,n}$ are a $\kk$-basis for $R^G$ if and only if $q$ lies in $\kk^\times$.
\end{cor}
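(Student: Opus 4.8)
The plan is a direct induction on $m$, feeding the multiplication rule of Lemma~\ref{x-multiplication} into the recurrences \eqref{Stirling-recurrences}. For the base case $m=0$, note that the empty word $()$ (respectively the empty flag) is the monoid identity, so $x^0=1=x_0$ and $(x^{(q)})^0=1=x^{(q)}_0$, matching $\sum_k S(0,k)\,x_k$ and $\sum_k S_q(0,k)\,x^{(q)}_k$ since $S(0,k)=S_q(0,k)=\delta_{0,k}$.

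For the inductive step with $M=\freelrb_n$, assume $x^m=\sum_k S(m,k)\,x_k$; multiplying by $x$ and applying the first identity of Lemma~\ref{x-multiplication} gives
\[
x^{m+1}=\sum_k S(m,k)\bigl(k\,x_k+x_{k+1}\bigr)
=\sum_k\bigl(k\,S(m,k)+S(m,k-1)\bigr)x_k
=\sum_k S(m+1,k)\,x_k,
\]
where the last equality is precisely the recurrence defining $S(m+1,k)$. The case $M=\qfreelrb_n$ runs word-for-word the same way using the second identity of Lemma~\ref{x-multiplication}: the coefficient of $x^{(q)}_k$ in $x^{(q)}\cdot(x^{(q)})^m=\sum_k S_q(m,k)\bigl([k]_q x^{(q)}_k+q^k x^{(q)}_{k+1}\bigr)$ is $[k]_q\,S_q(m,k)+q^{k-1}S_q(m,k-1)=S_q(m+1,k)$. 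One small bookkeeping point: an easy induction from \eqref{Stirling-recurrences} shows $S(m,k)=S_q(m,k)=0$ whenever $k>m$, so for $0\le m\le n$ every index that occurs lies in $\{0,1,\ldots,n\}$; the convention $x_{n+1}=0=x^{(q)}_{n+1}$ is then invoked only harmlessly, and it is consistent with the recurrence since $S(m+1,n+1)$ and $S_q(m+1,n+1)$ never arise when $m+1\le n$.

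For the change-of-basis conclusions, the expansions just proved exhibit $\{x^m\}_{m=0}^n$ and $\{(x^{(q)})^m\}_{m=0}^n$ in terms of the known $\kk$-bases $\{x_k\}_{k=0}^n$, $\{x^{(q)}_k\}_{k=0}^n$ of $R^G$ via the $(n+1)\times(n+1)$ lower-triangular matrices $\bigl(S(m,k)\bigr)_{0\le m,k\le n}$ and $\bigl(S_q(m,k)\bigr)_{0\le m,k\le n}$. By \eqref{boundary-q-Stirlings} their diagonal entries are $S(m,m)=1$ and $S_q(m,m)=q^{\binom m2}$, so the first matrix is unitriangular, hence invertible over any commutative ring $\kk$, giving that $\{x^k\}_{k=0}^n$ is always a $\kk$-basis of $R^G$. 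The second matrix has determinant $\prod_{m=0}^n q^{\binom m2}$, a power of $q$; since a square matrix over $\kk$ is invertible exactly when its determinant lies in $\kk^\times$, and this power of $q$ is a positive power of $q$ once $n\ge2$, it follows that $\{(x^{(q)})^k\}_{k=0}^n$ is a $\kk$-basis of $R^G$ if and only if $q\in\kk^\times$ (the cases $n\le1$ being degenerate: there the two spanning sets already coincide).

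I do not expect a genuine obstacle here; the argument is essentially a transcription of the Stirling recurrences. The points that deserve a moment's care are matching the Stirling and $q$-Stirling \emph{initial} conditions in the base case, checking that the $k=n$ boundary behavior is compatible with both the recurrence and the convention $x_{n+1}=0$, and computing the determinant of the $q$-Stirling matrix from the boundary values \eqref{boundary-q-Stirlings}.
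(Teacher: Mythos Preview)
Your proposal is correct and follows essentially the same route as the paper: induction on $m$, using Lemma~\ref{x-multiplication} to feed into the Stirling recurrences \eqref{Stirling-recurrences}, and then reading off the change-of-basis conclusion from the triangular form with diagonal entries $S(m,m)=1$ and $S_q(m,m)=q^{\binom{m}{2}}$. Your version is slightly more detailed in spelling out the base case, the boundary bookkeeping at $k=n$, and the degenerate situation $n\le 1$, but the argument is the same.
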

\begin{proof}
Both expansions follow by induction on $m$.
Here is the inductive step calcuation in the $q$-Stirling case, applying induction, Lemma~\ref{x-multiplication} and \eqref{Stirling-recurrences} for equalities (*),(**),(***), respectively:  
$$
\begin{aligned}
(x^{(q)})^m 
=x^{(q)} \cdot(x^{(q)})^{m-1}
&\overset{(*)}{=}x^{(q)} \cdot \sum_{k} S_q(m-1,k)\,\, x^{(q)}_k\\
&=\sum_{k} S_q(m-1,k)\,\, x^{(q)} \cdot x^{(q)}_k\\
&\overset{(**)}{=}\sum_{k} S_q(m-1,k)
\left( 
[k]_q x^{(q)}_k + q^k x^{(q)}_{k+1}
\right)\\
&=\sum_{k}
\left( 
[k]_q S_q(m-1,k) + q^{k-1} S_q(m-1,k-1)
\right) x^{(q)}_k
\overset{(***)}{=}\sum_{k}S_q(m,k) x^{(q)}_k.
\end{aligned}
$$
The $q$-expansion is invertible only when
$q$ lies in $\kk^\times$ due to triangularity
and $S_q(m,m)=q^{\binom{m}{2}}$.
\end{proof}

This leads to our answer for Question \ref{first-main-invariant-theory-question}.

\begin{thm}
\label{semisimplicity-theorem}
Let $\kk$ be any commutative ring with $1$, and $R=\kk M$ for either of the monoids $M=\freelrb_n, \qfreelrb_n$,
with symmetry groups $G=\symm_n, GL_n$.
If $M=\qfreelrb_n$, assume further that $q$ is in $\kk^\times$.
\begin{enumerate}
\item[(i)] 
The unique $\kk$-algebra map $
\kk[X] \overset{\gamma}{\longrightarrow} R$
defined by
$$
X  \longmapsto 
\begin{cases} 
x & \text{ if }M=\freelrb_n,\\
x^{(q)} & \text{ if }M=\qfreelrb_n,\\
\end{cases}
$$
induces an algebra isomorphism
$
\kk[X]/(f(X)) \cong R^G
$
where
$$
f(X):=
\begin{cases}
X(X-1)(X-2) \cdots (X-n)
& \text{ if }M=\freelrb_n,\\
X(X-[1]_q)(X-[2]_q) \cdots (X-[n]_q)
& \text{ if }M=\qfreelrb_n.\\
\end{cases}
$$
Hence $R^G$ is
commutative and generated by $x$ or $x^{(q)}$.

\item[(ii)]
If  $\kk$ is a field where $|G|$ is invertible, then $x$ or $x^{(q)}$
acts semisimply on any finite-dimensional $R^G$-module,
with eigenvalues contained in the lists
$$
\begin{cases}
0,1,2,\ldots,n & \text{ if }M=\freelrb_n,\\
[0]_q,[1]_q,[2]_q,\ldots,[n]_q
& \text{ if }M=\qfreelrb_n.\\
\end{cases}
$$
\end{enumerate}
\end{thm}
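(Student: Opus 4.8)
The plan is to deduce both parts from Lemma~\ref{x-multiplication} and Corollary~\ref{Stirling-corollary} by elementary linear algebra over $\kk$, running the two monoids in parallel. Throughout, write $\xi$ for $x$ when $M=\freelrb_n$ and for $x^{(q)}$ when $M=\qfreelrb_n$, and write $\lambda_0,\lambda_1,\ldots,\lambda_n$ for the scalars $0,1,\ldots,n$ in the first case and $[0]_q,[1]_q,\ldots,[n]_q$ in the second, so that in both cases $f(X)=\prod_{j=0}^{n}(X-\lambda_j)$. For part (i): since $\xi\in R^G$, the image of $\gamma$ is the $\kk$-subalgebra $\kk[\xi]\subseteq R^G$, which is commutative because it is generated by a single element. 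By Lemma~\ref{x-multiplication}, the matrix displayed there is the matrix of multiplication by $\xi$ on $R^G$ in the orbit-sum basis $\{x_\ell\}_{\ell=0}^n$ (resp.\ $\{x^{(q)}_\ell\}_{\ell=0}^n$); it is lower triangular with diagonal entries $\lambda_0,\ldots,\lambda_n$, hence has characteristic polynomial $f(X)$. By Cayley--Hamilton, $f(\xi)$ annihilates $R^G$, and evaluating at $1\in R^G$ gives $f(\xi)=0$ in $R$; thus $(f(X))\subseteq\ker\gamma$, and $\gamma$ induces an algebra surjection $\bar\gamma\colon\kk[X]/(f(X))\twoheadrightarrow\kk[\xi]$. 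Corollary~\ref{Stirling-corollary}, which in the $q$-case invokes the standing hypothesis $q\in\kk^\times$, shows $\{1,\xi,\xi^2,\ldots,\xi^n\}$ is a $\kk$-basis of $R^G$, so $\kk[\xi]=R^G$ and $\bar\gamma$ surjects onto $R^G$. Since $f(X)$ is monic of degree $n+1$, its source $\kk[X]/(f(X))$ is free of rank $n+1$ over $\kk$, and $R^G$ is free of rank $n+1$ with basis $\{x_\ell\}_{\ell=0}^n$ (resp.\ $\{x^{(q)}_\ell\}_{\ell=0}^n$); a surjection between free $\kk$-modules of the same finite rank is an isomorphism, so $\bar\gamma$ is an isomorphism, proving $R^G\cong\kk[X]/(f(X))$ and that $R^G$ is commutative and generated by $\xi$.

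For part (ii): let $\kk$ be a field in which $|G|$ is invertible. The crux is that the scalars $\lambda_0,\ldots,\lambda_n$ are then pairwise distinct in $\kk$. When $G=\symm_n$, invertibility of $|G|=n!$ means $\mathrm{char}\,\kk=0$ or $\mathrm{char}\,\kk>n$, so $0,1,\ldots,n$ are distinct. When $G=GL_n$, we have $|GL_n|=q^{\binom{n}{2}}\prod_{i=1}^{n}(q^i-1)$, so its invertibility forces $q$ and each of $q-1,q^2-1,\ldots,q^n-1$ to be units of $\kk$; since for $0\le j<i\le n$ one has $\lambda_i-\lambda_j=[i]_q-[j]_q=q^{\,j}[i-j]_q=q^{\,j}(q^{\,i-j}-1)(q-1)^{-1}$, a product of units, the differences $\lambda_i-\lambda_j$ are units and in particular nonzero. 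Therefore $f(X)=\prod_{j=0}^{n}(X-\lambda_j)$ has $n+1$ distinct roots in $\kk$, and the Chinese Remainder Theorem gives an isomorphism of $\kk$-algebras $\kk[X]/(f(X))\cong\prod_{j=0}^{n}\kk[X]/(X-\lambda_j)\cong\kk^{\,n+1}$. This is a semisimple commutative $\kk$-algebra whose simple modules are the one-dimensional $\kk[X]/(X-\lambda_j)$, on which $X$ acts by $\lambda_j$; hence any finite-dimensional module over it is a direct sum of such, i.e.\ $X$ acts diagonalizably with eigenvalues drawn from $\lambda_0,\ldots,\lambda_n$. Transporting along the isomorphism $R^G\cong\kk[X]/(f(X))$ of part (i), the same conclusion holds for the action of $\xi$ on any finite-dimensional $R^G$-module.

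Nearly every step is formal: the Cayley--Hamilton observation that $f(\xi)=0$, the rank count identifying $\bar\gamma$ as an isomorphism, and the standard structure theory of $\kk^{\,n+1}$. The one point deserving care is the $q$-case arithmetic in part (ii) --- checking that invertibility of $|GL_n(\F_q)|$ in $\kk$ is precisely what separates the $q$-integers $[0]_q,\ldots,[n]_q$. This rests on the identity $[i]_q-[j]_q=q^{\,j}[i-j]_q$ together with recognizing the factors $q-1,\ldots,q^n-1$ inside $|GL_n|$; I expect it to be the only genuine obstacle, the rest being bookkeeping.
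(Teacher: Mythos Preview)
Your proof is correct and follows essentially the same approach as the paper's: both deduce part (i) from Lemma~\ref{x-multiplication} (triangular matrix, characteristic polynomial $f$) together with Corollary~\ref{Stirling-corollary} (powers of $\xi$ form a $\kk$-basis of $R^G$), and both deduce part (ii) by checking that invertibility of $|G|$ forces the roots $\lambda_0,\ldots,\lambda_n$ to be distinct. The only cosmetic differences are that the paper argues ``maps a basis to a basis'' where you invoke ``surjection of free modules of equal finite rank is an isomorphism,'' and the paper appeals to the minimal polynomial having distinct roots where you spell out the Chinese Remainder decomposition $R^G\cong\kk^{n+1}$; both pairs of arguments are equivalent.
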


\begin{proof}
For assertion (i), note that
Lemma~\ref{x-multiplication}
shows that $x$ or $x^{(q)}$ acts on $R^G$ with
characteristic polynomial $f(X)$. 
Consequently, the kernel
of the algebra map $\kk[X] \overset{\gamma}{\longrightarrow}
R^G$ contains $f(X)$,
and $\gamma$ descends to a map on the quotient
$\kk[X]/(f(X)) \overset{\gamma}{\longrightarrow} R^G$.  
Since $f(X)$ is monic of degree $n+1$, the quotient
$\kk[X]/(f(X))$ has $\kk$-basis $(1,X,X^2,\ldots,X^n)$,
and Corollary~\ref{Stirling-corollary} shows that $\gamma$ maps this onto a $\kk$-basis of powers
$\{ x^k\}_{k=0}^n$ or $\{ (x^{(q)})^k\}_{k=0}^n$
for $R^G$.  Hence $\gamma$ is an
algebra isomorphism.

For assertion (ii), assume that 
$\kk$ is a field where the
roots of the characteristic polyonomial $f(X)$ of $x$ or $x^{(q)}$ 
acting on $R^G$ are all distinct.
This means that $f(X)$ must also
be the minimal polynomial for $x, x^{(q)}$ acting on
$R^G$, and that it 
acts semisimply in any finite dimensional $R^G$-module, with eigenvalues contained in that set of roots.  Lastly, note the groups $G$ have cardinalities
$$
|G|=\begin{cases}
|\symm_n|=n! & \text{ for }M=\freelrb_n,\\
|GL_n|=q^{n \choose 2}(q - 1)^n [n]!_q = (q^n-1)(q^n-q) (q^n-q^2)\cdots (q^n-q^{n-1}) & \text{ for }M=\qfreelrb_n,\\
\end{cases}
$$
where the $q$-factorial $[n]!_q$ is defined by
\begin{equation}
\label{q-factorial-definition}
[n]!_q:=[n]_q [n-1]_q \cdots [2]_q [1]_q.
\end{equation}
One can then check that the invertibility of $n!$ in $\kk$ and distinctness of $0,1,2,\ldots,n$ are both equivalent to $\kk$
having characteristic zero or a prime $p>n$, while
invertiblility of $|GL_n|$ in $\kk$ and distinctness of $[0]_q,[1]_q,[2]_q,\ldots,[n]_q$ are both equivalent to $\kk$
having characteristic zero or characteristic coprime to $q$ and to $[m]_q$ for $m=1,2,\ldots,n$.
\end{proof}

We close this section with some remarks on Brown's other $q$-analogues of $\freelrb_n$.

\begin{remark}
The analysis in Lemma~\ref{x-multiplication}
can be lifted to an analogous (and even simpler) computation in Brown's first $q$-analogue $\freelrb_{n,q}$.
Denoting the orbit sum $\kk$-basis in $\kk \freelrb_{n,q}$ by $y_0,y_1,\ldots,y_n$,  multiplication by the element $y:=y_1=\sum_{v \in (\F_q)^n \setminus \{\mathbf{0}\}} (v)$ acts on that basis as follows:
\begin{equation}
\label{x-multiplication-in-Browns-first-q-analogue}
y \cdot y_\ell = (q^\ell - 1)y_\ell + y_{\ell + 1}.
\end{equation}
Bearing in mind that the monoid surjection $\freelrb_{n,q} \overset{\pi}{\twoheadrightarrow} \qfreelrb_n$ described in Remark~\ref{three-q-analogues-warning-remark} has exactly 
$$
(q-1)(q^2-q) \cdots (q^\ell-q^{\ell-1})=(q-1)^\ell q^{\binom{\ell}{2}}
$$ 
preimages $(v_1,v_2,\ldots,v_\ell)$ for every flag $\AAA=(A_1,A_2,\ldots,A_\ell)$,
one can check that
\eqref{x-multiplication-in-Browns-first-q-analogue} maps under the linearization $\kk \freelrb_{n,q} \overset{\pi}{\twoheadrightarrow} \kk \qfreelrb_n$
to a formula consistent with the second formula in Lemma~\ref{x-multiplication}.
\end{remark}

\begin{remark}
It is also easy to check that Lemma~\ref{x-multiplication}
gives similar computations in the other monoids
$\overline{\freelrb}_n$
and $\overline{\freelrb}_{n,q}$
considered by Brown, discussed in Remark~\ref{three-q-analogues-warning-remark}. 
Specifically, in $\kk \overline{\freelrb}_n$, one has

\[
\overline{x} \cdot \overline{x}_\ell = \begin{cases}\ell \overline{x}_\ell + \overline{x}_{\ell + 1} &\text{ if $0 \leq \ell < n - 1$},\\ n \overline{x}_{n - 1} &\text{ if $\ell = n - 1$}\\\end{cases}
\]
and in $\kk \overline{\freelrb}_{n,q}$, one has
\[
\overline{x}^{(q)} \cdot \overline{x}^{(q)}_\ell = \begin{cases}[\ell]_q \overline{x}^{(q)}_\ell + q^\ell \overline{x}^{(q)}_{\ell + 1} &\text{ if $0 \leq \ell < n - 1$},\\ [n]_q\overline{x}^{(q)}_{n - 1} &\text{ if $\ell = n - 1$}.\\\end{cases}
\]
The point is that when one $\kk$-linearizes the monoid surjection
$\freelrb \rightarrow \overline{\freelrb}_n$
it maps $x_\ell \longmapsto \overline{x}_\ell$ for $i \leq n-2$, and maps $x_{n-1},x_n \longmapsto \overline{x}_{n-1}$. An analogous statement holds for  $\qfreelrb \rightarrow \overline{\freelrb}_{n,q}$.  One can then check that applying
these linearized surjections to
Lemma~\ref{x-multiplication} gives the above formulas.
\end{remark}

\section{Representation-theoretic preliminaries}
\label{representations-review-section}

Having answered  Question~\ref{first-main-invariant-theory-question}
by describing the structure of $R^G$,
the next few subsections collect and review 
some facts regarding representations of $G=\symm_n$ and $G=GL_n$ that will help us answer Question~\ref{second-main-invariant-theory-question} in Section \ref{main-results-section} on the structure of $R$, simultaneously as
an $R^G$-module and $G$-representation.

\subsection{Semisimplicity, filtrations, and eigenspaces}\label{sec:semisimplicityandfiltrations}

In what follows, we will be examining various modules $V$ over the monoid algebra $R=\kk M$ for the two monoids $M=\freelrb_n,\qfreelrb_n$, carrying
$\kk G$-module structures for
the automorphism groups $G=\symm_n, GL_n$.  In all cases, the $G$-actions on $R$ and $V$ will be {\it compatible} in the sense that
$$
g( r \cdot v )=g(r)\cdot g(v) 
\text{ for all }r \in R, v \in V, g \in G.
$$
Note that in this setting, $V$ carries commuting actions of
$R^G$ and of $\kk G$,
and we will wish to describe it simultaneously as a module over both.

Henceforth, assume that $\kk$ is a field in which $|G|$ is invertible, and take $V$ to be 
finite-dimensional over $\kk$.  This implies that $V$
is semisimple both as an $R^G$-module due to  Theorem~\ref{semisimplicity-theorem}(ii), and as a $\kk G$-module by Maschke's Theorem.

In order to answer Question \ref{second-main-invariant-theory-question}, we will utilize two important features of our setting:
\begin{enumerate}

    \item Semisimplicity implies that given a filtration by $R^G$-submodules
and $\kk G$-submodules $V_i$
$$
\{0\}=V_0 \subset V_1 \subset \cdots \subset V_r =V
$$
one actually has an $R^G$-module
and $\kk G$-module isomorphism
$$
V \cong \bigoplus_i V_i/V_{i-1}.
$$
This will play a crucial role in Section \ref{entire-semigroup-section} (and in particular our proof of Theorem \ref{thm:semigroupeigen}), where we will define filtrations on $\kk \freelrb_n$ and $\kk \qfreelrb_n$ that significantly simplify the analysis. \\

\item By Theorem~\ref{semisimplicity-theorem}(ii),
we have that $R^G$ is generated by
the single element $x$ or $x^{(q)}$, which acts
diagonalizably with certain eigenvalues $\lambda$ all lying in $\kk$. It follows that in order to understand the $R^{G}$ and $\kk G$-module structure of any module $V$, it suffices to decompose
the eigenspaces $\ker((x-\lambda)|_V)$
as $\kk G$-modules. 

Hence, we will answer Question \ref{second-main-invariant-theory-question} by describing the $j$-eigenspaces of $\kk \freelrb_n$ as $\symm_n$-representations and the  $[j]_q$-eigenspaces of $\kk \qfreelrb_n$ as $GL_n$- representations for $j = 0, 1, \ldots, n$.
\end{enumerate}

\subsection{Symmetric functions,  $\symm_n$-representations and unipotent $GL_n$-representations} 
\label{Frobenius-isomorphism-section}

We review here the relation between the {\it ring of symmetric functions} $\Lambda$ and representations of $\symm_n$;  see Sagan \cite{Sagan}, Stanley \cite{Stanley-EC2} as references, and for undefined terminology.  We then review the parallel story for R. Steinberg's {\it unipotent representations} of $GL_n$; see \cite[\S 4.2, 4.6, 4.7]{GrinbergReiner}
as a reference.

The {\it ring of symmetric functions} $\Lambda$ (of bounded degree, in infinitely many variables) may be viewed as a polynomial algebra $\Z[h_1,h_2,\ldots]=\Z[e_1,e_2,\ldots]$ where $h_n, e_n$ are the {\it complete homogeneous} and {\it elementary} symmetric functions of degree $n$.  One may view $\Lambda$ as a graded $\Z$-algebra $\Lambda=\bigoplus_{n=0}^\infty \Lambda^n$, which we wish to relate to the direct sum 
$$
\virtchars(\symm) := 
\bigoplus_{n=0}^\infty
\virtchars(\symm_n)
$$
where $\virtchars(\symm_n)$ denotes
the $\Z$-module of {\it virtual characters}
of $\symm_n$.  That is, $\virtchars(\symm_n)$ is the free $\Z$-module on the basis of
irreducible characters $\{\specht^\lambda\}$
indexed by the partitions $\lambda$ of $n$, or alternatively, the $\Z$-submodule of class functions on $\symm_n$ of the form $\chi-\chi'$ for genuine characters $\chi,\chi'$.  One makes $\virtchars(\symm)$ into a 
graded algebra via the {\it induction product} defined by
\begin{equation}
\label{parabolic-induction-defn}
\begin{array}{rcl}
\virtchars(\symm_{n_1}) \times 
\virtchars(\symm_{n_2}) &\longrightarrow& \virtchars(\symm_{n_1+n_2}) \\
(f_1,f_2) &\longmapsto& f_1*f_2:=(f_1 \otimes f_2)\uparrow_{\symm_{n_1} \times \symm_{n_2}}^{\symm_{n_1+n_2}}
\end{array}
\end{equation}
where $(-)\uparrow_H^G$ is the usual
{\it induction} of class functions on a subgroup $H$ to class functions on $G$.

For later use, we note that since $[\symm_{n_1+n_2}: \symm_{n_1} \times \symm_{n_2}]=\binom{n_1+n_2}{n_1}$, whenever $f_1, f_2$ are genuine characters, one has the formula for the degree of $f_1 * f_2$:
\begin{equation}
\label{induction-product-degree-formula}
\deg(f_1 * f_2) = \binom{n_1+n_2}{n_1} \deg(f_1) \deg(f_2).
\end{equation}

One then has the {\it Frobenius characteristic isomorphism} of $\Z$-algebras $\virtchars(\symm) 
\overset{\ch}{\longrightarrow} \Lambda$, mapping
$$
\begin{array}{rcl}
\virtchars(\symm) &
\overset{\ch}{\longrightarrow} &\Lambda\\ [0.05in]
\triv_{\symm_n} &\longmapsto & h_n\\ [0.05in]
\sgn_{\symm_n} & \longmapsto & e_n\\ [0.05in]
\specht^\lambda &\longmapsto & s_\lambda.
\end{array}
$$
Here $s_\lambda$ is the {\it Schur function}. For a composition $\alpha = \alpha_1, \alpha_2, \cdots, \alpha_\ell$, we use the standard shorthand  \[h_{\alpha}:= h_{\alpha_1}h_{\alpha_2}\cdots h_{\alpha_\ell}.\] 
For later use, note that one can express the 
regular representation 
$\kk \symm_n=\triv_{\symm_1} * \triv_{\symm_1} * \cdots *\triv_{\symm_1}$, implying
\begin{equation}
\label{regular-rep-Frobenius-image}
\ch\,\, \kk\symm_n = h_1^n = h_{1^n}.
\end{equation}

There is a parallel story for a certain subset of $GL_n$-representations.  Specifically, there is a collection of irreducible $GL_n$-representations $\{\specht_q^\lambda\}$, 
indexed by partitions $\lambda$ of $n$, which are the irreducible constituents occurring within the $GL_n$-permutation action on
the set $GL_n/B$ of {\it complete flags of subspaces} $\flags(V)$ in 
$V=(\F_q)^n$.  They were studied by R. Steinberg \cite{Steinberg-unipotents}, and are now called
the {\it unipotent characters} of $GL_n$.  
Denoting by $\virtchars(GL_n)$ the free $\Z$-submodule of the
class functions on $GL_n$ with unipotent characters
$\{\specht_q^\lambda\}$ as a basis, one can define the {\it parabolic} or {\it Harish-Chandra induction} product
on the direct sum $\virtchars(GL):=\bigoplus_{n=0}^\infty \virtchars(GL_n)$ as
follows:
$$
\begin{array}{rcl}
\virtchars(GL_{n_1}) \times 
\virtchars(GL_{n_2}) &\longrightarrow& \virtchars(GL_{n_1+n_2}) \\
(f_1,f_2) &\longmapsto&
f_1*f_2:=
\left( 
(f_1 \otimes f_2) \Uparrow_{GL{n_1} \times GL_{n_2}}^{P_{n_1,n_2}}
\right)
\uparrow_{P_{n_1,n_2}.}^{GL{n_1+n_2}}
\end{array}
$$
Here $P_{n_1,n_2}$ is the maximal parabolic subgroup of $GL_{n_1+n_2}$ setwise stabilizing the $\F_q$-span of the first $n_1$ standard basis vectors, and
$(-)\Uparrow_{GL{n_1} \times GL_{n_2}}^{P_{n_1,n_2}}$
is the {\it inflation} operation that creates a $GL{n_1} \times GL_{n_2}$-representation from a 
$P_{n_1,n_2}$-representation, by precomposing with the surjective
homomorphism 
$
P_{n_1,n_2} \twoheadrightarrow  GL_{n_1} \times GL_{n_2}
$
sending
$
\left[ \begin{smallmatrix} A & B \\ 0 & C \end{smallmatrix}\right] 
\longmapsto  
\left[ \begin{smallmatrix} A & 0 \\ 0 & C \end{smallmatrix}\right].
$
For later use, we note that since the inflation operation does not change the degree of a representation, and since 
$$
[GL_{n_1+n_2}: P_{n_1,n_2}]=\qbinom{n_1+n_2}{n_1}{q}
=\frac{[n_1+n_2]!_q}{[n_1]!_q [n_2]!_q},
$$
(with $[n]!_q$ as in \eqref{q-factorial-definition})
when $f_1, f_2$ are genuine characters, one has this degree formula for $f_1 * f_2$:
\begin{equation}
\label{parabolic-induction-product-degree-formula}
\deg(f_1 * f_2) = \qbinom{n_1+n_2}{n_1}{q} \deg(f_1) \deg(f_2).
\end{equation}

This parabolic induction operation turns out to make $\virtchars(GL)$ into an associative, commutative $\Z$-algebra.  One then has a $q$-analogue of the Frobenius isomorphism
$\virtchars(GL) \overset{\ch_q}{\longrightarrow} \Lambda$ sending\footnote{One might wonder which $GL_n$-character maps under $\ch_q$ to the elementary symmetric function $e_n$; it is
the {\it Steinberg representation}, in which $GL_n$ acts on the top homology of the {\it Tits building}, which is the
simplicial complex of flags of nonzero proper subspaces
in $(\F_q)^n$.}
$$
\begin{array}{rcl}
\virtchars(GL) &
\overset{\ch_q}{\longrightarrow} &\Lambda\\ [.05 in]
\triv_{GL_n} &\longmapsto & h_n\\  [.05 in]
\specht_q^\lambda &\longmapsto & s_\lambda.
\end{array}
$$
Note that the permutation representation 
$\kk[GL_n/B]$  of $GL_n$ on the complete flags can be expressed as $\triv_{GL_1} * \triv_{GL_1} * \cdots *\triv_{GL_1}$ and therefore one has this $q$-analogue of \eqref{regular-rep-Frobenius-image}:
\begin{equation}
\label{flag-rep-Frobenius-image}
\ch_q\,\, \kk[GL_n/B] = h_1^n = h_{1^n}.
\end{equation}

\subsection{($q$-)derangement numbers and representations}
\label{sec:derangement}
A central role in this story is played by the classical {\it derangement numbers} $d_n$, and the {\it $q$-derangement numbers} $d_n(q)$ of Wachs \cite{Wachs}:
\begin{equation}
\label{derangement-numbers}
\begin{array}{rcl}
d_n&:=&n!\sum_{k=0}^n \frac{(-1)^k}{k!}
=n!
\left(\frac{1}{0!} 
-\frac{1}{1!} 
+\frac{1}{2!}
-\frac{1}{3!} 
+\frac{1}{4!} 
- \cdots 
+\frac{(-1)^n}{n!}
\right)\\
d_n(q)&:=&[n]!_q \sum_{k=0}^n \frac{(-1)^k}{[k]!_q}q^{k \choose 2}.
\end{array}
\end{equation}
There are two well-known combinatorial models for $d_n$ counting permutations in $\symm_n$:
\begin{itemize}
    \item  {\it derangements}, which are the fixed-point free permutations, or 
    \item {\it desarrangements}, which are permutations $w=(w_1,w_2,\ldots,w_n)$ whose first ascent position $i$ with $w_i< w_{i+1}$ (using $w_{n+1}=n+1$ by convention) occurs for  an even position $i$.
\end{itemize}  
Wachs \cite{Wachs} and later D\'esarm\'enien and Wachs \cite{DesarmenienWachs} gave various interpretations for $d_n(q)$. In particular, $d_n(q)$ is still closely related to derangements and desarrangements. Letting $D_n, E_n$ denote the derangements and desarrangements in $S_n$, and defining the \textit{major index} statistic of a permutation $w=(w_1,\ldots,w_n)$
as $\text{maj}(\sigma)=\sum_{i:w_i > w_{i+1}} i$,
one has 
\[
d_n(q) = \sum_{\sigma \in D_n}q^{\text{maj}(\sigma)} = \sum_{\sigma \in E_n}q^{\text{maj}(\sigma^{-1})}.
\]

These $d_n, d_n(q)$ are the dimensions for a pair of representations
of $\symm_n$ and $GL_n$, which we call the {\it derangement representation} $\derangementrep_n$  and its (unipotent) $q$-analogue
$\derangementrep_n^{(q)}$.  Both have the same symmetric function image
$\derangement_n$ under the Frobenius maps $\ch$ and $\ch_q$,
a symmetric function with many equivalent descriptions.
For the reader's convenience, and for future use, we will compile these descriptions in Proposition \ref{derangement-symmetric-function-prop}, after first reviewing terminology.

Define for a permutation $w=(w_1,w_2,\ldots,w_n)$ in $\symm_n$ its {\it descent set} 
$$
\Des(w):=\{i \in \{1,2,\ldots,n-1\}:w_i > w_{i+1}\}.
$$
For example, $w=(6,3,5,2,1,4)$ has $\Des(w)=\{1,3,4\}$.
Note that the definition of a {\it desarrangement} given above may be rephrased as a permutation $w$ in $\symm_n$
for which the smallest element of $\{1,2,\ldots,n\}\setminus \Des(w)$ is even. Thus $w=(6,3,5,2,1,4)$ is a desarrangement,
since $\min(\{1,2,3,4,5,6\}\setminus\{1,3,4\}))=2$ is even.

Given a standard Young tableau $Q$ with $n$ cells written in English notation, its {\it descent set} is 
$$
\Des(w):=\{i \in \{1,2,\ldots,n-1\}: i+1 \text{ appears south and weakly west of }i\text{ in }Q \}.
$$
For example, 
\ytableausetup{smalltableaux}
$
Q=\begin{ytableau}
\ytableausetup{smalltableaux}
1&3\\
2&6\\
4\\
5
\end{ytableau}
$
has $\Des(Q)=\{1,3,4\}$.
Define a {\it desarrangement tableau} to be a standard Young tableau $Q$ with $n$ cells
for which the smallest element of $\{1,2,\ldots,n\}\setminus \Des(Q)$ is even.  Thus
the example tableau $Q$ given above is a desarrangement tableau.

Finally, for integers $n \geq 1$ and $D \subseteq [n]$, define
{\it Gessel's fundamental quasisymmetric function}
$$
L_{n,D}:=\sum_{\substack{1 \leq i_1 \leq i_2 \leq \cdots \leq i_n\\i_j < i_{j+1} \text{ if }j \in D}}
x_{i_1} x_{i_2} \cdots x_{i_n}
$$
which is a formal power series in $x_1,x_2,\ldots$, homogeneous of degree $n$.  For $w$ in $\symm_n$, let $\lambda(w)$ denote its cycle type partition of $n$.  For any partition $\lambda$ of $n$, the {\it higher Lie character} of Thrall \cite{Thrall}
or the {\it Gessel-Reutenauer symmetric function} $\higherLie
_\lambda$ (see \cite{gesselreutenauer}, \cite[\S6.6]{GrinbergReiner}, \cite[Exer. 7.89 ]{Stanley-EC2}) can be defined as
$$
\higherLie_\lambda:=\sum_{\substack{w \in \symm_n:\\ \lambda(w)=\lambda}}
L_{n,\Des(w)}.
$$

\begin{prop}
\label{derangement-symmetric-function-prop}
With the convention that $\derangement_0:=1$,
the following definitions of a sequence of 
symmetric functions
$\{\derangement_n\}_{n=0,1,2,\ldots}$
are all equivalent:
\begin{enumerate}
    \item [(A)]$\derangement_n=h_1 \derangement_{n-1} +(-1)^n e_n$ for $n \geq 1$;
    \item [(B)] $\derangement_n=\sum_{k=0}^n (-1)^k e_k \cdot h_{1^{n-k}}$;
    \item [(C)] $\derangement_n = h_{1^n}-\sum_{j=0}^{n-1} \derangement_j h_{n-j}$ (or equivalently, 
    $h_{1^n}=\sum_{j=0}^n \derangement_j h_{n-j}$) for $n \geq 1$;
    \item [(D)] $\derangement_n = \sum_Q s_{\lambda(Q)}$
    where $Q$ runs through the desarrangement tableaux of size $n$;
    \item [(E)] $\derangement_n = \sum_w L_{n,\Des(w)}$
    where $w$ runs through all desarrangements in $\symm_n$;
    \item [(F)] $\derangement_n = \sum_w L_{n,\Des(w)}$
    where $w$ runs through all derangements in $\symm_n$;
    \item[(G)] $\derangement_n = \sum_\lambda \higherLie_{\lambda}$ where $\lambda$ runs through all partitions of $n$ with no parts of size one.
\end{enumerate}
\end{prop}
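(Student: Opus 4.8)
The plan is to organize the seven characterizations into a cluster of symmetric-function identities (A), (B), (C), which are interderivable by elementary manipulations, and a cluster of combinatorial descriptions (D), (E), (F), (G), linked to one another by the RSK correspondence and the D\'esarm\'enien--Wachs bijection, and then to bridge the two clusters with one computation using the product rule for fundamental quasisymmetric functions. For (A)$\Leftrightarrow$(B): since $h_1\cdot h_{1^{n-1-k}}=h_{1^{n-k}}$, multiplying the sum in (B) for $n-1$ by $h_1$ cancels all but one term of the sum in (B) for $n$, leaving $\derangement_n-h_1\derangement_{n-1}=(-1)^ne_n$; as both definitions give $\derangement_0=1$ and (A) determines $\derangement_n$ from $\derangement_{n-1}$, the two agree. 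For (B)$\Leftrightarrow$(C): in $\Lambda[[t]]$ one has $\sum_{n\geq0}h_{1^n}t^n=(1-h_1t)^{-1}$, and the series $E(t)=\sum_{k\geq0}e_kt^k$, $H(t)=\sum_{m\geq0}h_mt^m$ satisfy $E(-t)H(t)=1$; then (B) says $\sum_n\derangement_nt^n=E(-t)(1-h_1t)^{-1}$, while the equivalent form $h_{1^n}=\sum_{j=0}^n\derangement_jh_{n-j}$ of (C) says $(1-h_1t)^{-1}=\bigl(\sum_n\derangement_nt^n\bigr)H(t)$, and these are the same identity (the two versions of (C) differing only by isolating the $j=n$ term).

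For the bridge --- the step I expect to be the main obstacle --- I would pass from (B) to (E) as follows. Writing $e_k=L_{k,\{1,2,\dots,k-1\}}$ (the fundamental quasisymmetric function attached to the full descent set in degree $k$) and $h_{1^{n-k}}=\sum_{v\in\symm_{n-k}}L_{n-k,\Des(v)}$, and applying the shuffle product rule $L_{k,\Des(u)}\cdot L_{m,\Des(v)}=\sum_wL_{k+m,\Des(w)}$ (the sum over all shuffles $w$ of the one-line word of $u$ with that of $v$ shifted up by $k$), one obtains $e_k\,h_{1^{n-k}}=\sum_{w\in B_k}L_{n,\Des(w)}$, where $B_k$ is the set of $w\in\symm_n$ whose entries $1,2,\dots,k$ occur in decreasing order, i.e.\ $w^{-1}(1)>w^{-1}(2)>\dots>w^{-1}(k)$. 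Substituting into (B), the coefficient of each $L_{n,\Des(w)}$ becomes $\sum_{k=0}^{K(w)}(-1)^k$, where $K(w)$ is the length of the longest strictly decreasing prefix of the one-line word of $w^{-1}$; this alternating sum is $1$ when $K(w)$ is even and $0$ when it is odd. Since $K(w)$ is precisely the first-ascent position of $w^{-1}$, the surviving terms are exactly those with $w^{-1}$ a desarrangement, which gives $\derangement_n=\sum_wL_{n,\Des(w^{-1})}$ summed over desarrangements $w$ --- the content of (E). The delicate points are pinning down the set $B_k$ and recognizing $K(w)$ as the first-ascent statistic of $w^{-1}$.

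The remaining equivalences rest on classical correspondences. For (E)$\Leftrightarrow$(D): RSK sends $w\leftrightarrow(P(w),Q(w))$ with $\Des(w)=\Des(Q(w))$ and $\Des(w^{-1})=\Des(P(w))$, so ``$w^{-1}$ a desarrangement'' means ``$P(w)$ a desarrangement tableau''; grouping the permutations in (E) by $P(w)$ and using $s_\lambda=\sum_{Q:\,\lambda(Q)=\lambda}L_{n,\Des(Q)}$ collapses the right-hand side to $\sum_Ps_{\lambda(P)}$ over desarrangement tableaux $P$, which is (D). For (E)$\Leftrightarrow$(F): the D\'esarm\'enien--Wachs bijection between desarrangements and derangements --- the descent-set refinement of the $\mathrm{maj}$ identity recalled just before the statement --- identifies the multisets $\{\Des(v^{-1}):v\text{ a desarrangement}\}$ and $\{\Des(w):w\text{ a derangement}\}$, so the two quasisymmetric sums coincide. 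For (F)$\Leftrightarrow$(G): a permutation is a derangement exactly when all its cycles have length at least $2$, so partitioning the derangements of $\symm_n$ by cycle type $\lambda$ yields $\sum_{w\text{ a derangement}}L_{n,\Des(w)}=\sum_{\lambda\,:\,\text{all parts}\geq2}\ \sum_{w:\,\lambda(w)=\lambda}L_{n,\Des(w)}=\sum_{\lambda\,:\,\text{all parts}\geq2}\higherLie_\lambda$, which is (G). Much of the second cluster --- (D), (E), and the bijection behind (E)$\Leftrightarrow$(F) --- is due to D\'esarm\'enien--Wachs and Reiner--Webb, and (G) to Gessel--Reutenauer, so one may alternatively quote these steps rather than reproduce them.
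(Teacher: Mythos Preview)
Your argument is correct. The point of departure from the paper's proof is the bridge between the cluster (A),(B),(C) and the cluster (D),(E),(F),(G). The paper links them by showing directly that the Schur expansion (D) satisfies the recursion (A): via the Pieri rule, $h_1\cdot\derangement_{n-1}$ is the sum of $s_{\lambda(Q)}$ over standard tableaux $Q$ obtained by appending $n$ in a corner of a desarrangement tableau of size $n-1$, which reproduces all size-$n$ desarrangement tableaux up to a single-column discrepancy that is corrected by $(-1)^n e_n$. Your route instead expands (B) in fundamental quasisymmetric functions via the shuffle product and collapses the alternating sum as an inclusion--exclusion sieve indexed by the length of the initial decreasing run of $w^{-1}$, landing directly on the quasisymmetric expansion (E). Both bridges are short; the paper's is more pictorial and reaches the Schur expansion first, while yours makes the alternating-sum structure of (B) transparent and reaches the $L$-expansion first. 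Your generating-function derivation of (B)$\Leftrightarrow$(C) is also a pleasant alternative to the paper's inductive verification of (A)$\Rightarrow$(C).

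One point worth making explicit: your computation naturally produces $\derangement_n=\sum_w L_{n,\Des(w)}$ summed over those $w$ with $w^{-1}$ a desarrangement (equivalently $\sum_v L_{n,\Des(v^{-1})}$ over desarrangements $v$), and you then use this inverse-descent form consistently in the RSK step and in invoking the D\'esarm\'enien--Wachs equidistribution. That is the correct formulation: already for $n=3$ the two desarrangements $213,312$ both have ordinary descent set $\{1\}$, so $\sum_{v\ \text{desarr.}}L_{3,\Des(v)}=2L_{3,\{1\}}$ is not symmetric, whereas $\sum_{v\ \text{desarr.}}L_{3,\Des(v^{-1})}=L_{3,\{1\}}+L_{3,\{2\}}=s_{(2,1)}=\derangement_3$. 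This also matches the $\mathrm{maj}(\sigma^{-1})$ appearing in the displayed identity for $d_n(q)$ just before the proposition. You were right to track the inverse carefully here.
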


We will mainly need definition (C) for $\derangement_n$.  However, we wish to point out that part (D) decomposes
$\derangement_n$ very explicitly into Schur functions, illustrated in the
table below for $n=0,1,2,3,4$.
\begin{equation}
\label{desarrangement-tableau-table}
\begin{tabular}{|c|c|c|}\hline
$n$ & desarrangement tableaux $Q$ &  $\derangement_n$ \\\hline\hline
$0$ &  $\varnothing$ & $1$\\\hline
$1$& \text{(none)} &$0$\\\hline
&  &\\
$2$ & 
$\ytableausetup{smalltableaux}
\begin{ytableau} 1\\2  
\end{ytableau}$
&$s_{(1,1)}$\\
&  &\\\hline
&  &\\
$3$ &
$\ytableausetup{smalltableaux}
\begin{ytableau} 1&3\\2  
\end{ytableau}$
&$s_{(2,1)}$\\
&  &\\\hline
&  &\\
$4$ &
$\ytableausetup{smalltableaux}
\begin{ytableau} 1\\2 \\3\\4  
\end{ytableau}$
\quad
$\ytableausetup{smalltableaux}
\begin{ytableau} 1&3\\2\\4  
\end{ytableau}$
\quad
$\ytableausetup{smalltableaux}
\begin{ytableau} 1&3\\2&4  
\end{ytableau}$\quad
$\ytableausetup{smalltableaux}
\begin{ytableau} 1&3&4\\2   
\end{ytableau}$\quad
&$s_{(1,1,1,1)} +
s_{(2,1,1)} +
s_{(2,2)} +
s_{(3,1)}
$\\
&  &\\\hline
\end{tabular}
\end{equation}

\begin{proof}[Sketch proof of Proposition~\ref{derangement-symmetric-function-prop}.]
We sketch some of the equivalences here. The equivalence of (A), (B) is  straightforward. Defining $\{\derangement_n\}$ by $(A)$, note they
satisfy definition (C) by induction on $n$:
$$
\begin{aligned}
\sum_{j=0}^n \derangement_j h_{n-j}
= \left(\sum_{j=1}^n \derangement_j h_{n-j} \right)+ h_n
&= \left( \sum_{j=1}^n (h_1 \derangement_{j-1} +(-1)^j e_j) \cdot h_{n-j} \right) + h_n \\
&= h_1 \sum_{j=1}^n \derangement_{j-1}h_{n-j}
\overset{(*)}{=} \sum_{j=0}^n (-1)^j e_j h_{n-j}\\ 
&\overset{(**)}{=} h_1 \cdot h_{1^{n-1}} + 0 
= h_{1^n}.
\end{aligned}
$$
Here equality (*) used $\sum_{j=0}^n (-1)^j e_j h_{n-j}=0$  for $n \geq 1$, and equality (**) used induction.  Consequently, (A) and (C) define the same sequence of polynomials $\{\derangement_n\}$, and so
 (A),(B),(C) coincide.

Defining $\{\derangement_n\}$ by the explicit sum $(D)$, let us check that they also satisfy the recursive definition (A) by induction on $n$.  In the base case $n=0$, both have $\derangement_0=1$, since the unique
(empty) tableau of size $0$ is a desarrangement tableau. In the inductive step, using the Pieri formula shows that $h_1 \cdot \derangement_{n-1}$ is the sum over all standard tableaux of size $n$ obtained from a desarrangement tableau $Q$ of size $n-1$ by adding $n$ in any corner cell.  This produces
all desarrangement tableaux of size $n$, except
the single column tableau $Q_0$ which
\begin{itemize}
    \item is produced for $n$ odd, but is {\it not} a desarrangement tableaux, and
   \item is not produced for $n$ even, but {\it is} a desarrangement tableau.
\end{itemize}
These exceptions are corrected by 
$(-1)^n e_n$ in the formula $\derangement_n=h_1 \derangement_{n_1}+(-1)^n e_n$ in (A).
Consequently, (A) and (D) define the same sequence of polynomials $\{\derangement_n\}$.

The equivalence of (D) and (E) uses two facts.  First, applying the Robinson-Schensted bijection to $w$ to obtain a pair of standard Young tableaux $(P,Q)$,
one has $\Des(w)=\Des(Q)$; see \cite[Lem. 7.23.1]{Stanley-EC2}. Thus $w$ is a desarrangement if and only if $Q$ is a desarrangement tableau\footnote{Our earlier examples $w=(6,3,5,2,1,4)$ and $Q$
also exemplify this, as $w \mapsto (P,Q)$ with \ytableausetup{smalltableaux}
$
Q=\begin{ytableau}
\ytableausetup{smalltableaux}
1&3\\
2&6\\
4\\
5
\end{ytableau}
$
and
$
P=\begin{ytableau}
\ytableausetup{smalltableaux}
1&4\\
2&5\\
3\\
6
\end{ytableau}
$.}. Second, $s_\lambda=\sum_P L_{\Des(P)}$ where $P$ runs over 
standard Young tableaux of shape $\lambda$ by \cite[Thm. 7.19.7]{Stanley-EC2}.

The equivalence of (E) and (F) was proven by D\'esarm\'enien and Wachs in \cite{FrenchDesarmenienWachs}, where they showed that both families of symmetric functions defined in (E),(F) satisfy the recursive definition (C).  Their proof also used the equivalence of (F) and (G) that follows from the definition of $\higherLie_\lambda$.
\end{proof}

Note that part (B) of
Proposition~\ref{derangement-symmetric-function-prop} 
generalizes the formulas in \eqref{derangement-numbers}, upon taking dimensions of the various representations and using \eqref{induction-product-degree-formula}, \eqref{parabolic-induction-product-degree-formula}.
Similarly, part (C) corresponds to the formulas
\begin{equation}
\label{chamber-space-dime-is-sum-of-eigenespace-dims}
\begin{aligned}
\dim_\kk \kk \symm_n = n!&=\sum_{j=0}^n d_{n-j}\binom{n}{j},\\
\dim_\kk \kk [GL_n/B] = [n]!_q&=\sum_{j=0}^n d_{n-j}(q)\qbinom{n}{j}{q},\\
\end{aligned}
\end{equation}
after taking into account \eqref{regular-rep-Frobenius-image}, \eqref{flag-rep-Frobenius-image}.

We conclude this section with some further historical remarks
and context on the derangement representations $\derangementrep_n$ and symmetric functions $\derangement_n$.

\begin{remark}
We are claiming no originality in
Proposition~\ref{derangement-symmetric-function-prop}.  As mentioned in its proof, the equivalence of (C),(E),(F),(G) is work of D\'esarm\'enien and Wachs \cite{FrenchDesarmenienWachs}.  
In \cite[Propositions 2.2, 2.1, 2.3]{ReinerWebb},  Webb and the third author noted that one can re-package their results to include part (D).  It was also noted there that the tensor product $\sgn \otimes\derangementrep_n$
 of $\derangementrep_n$ with the one-dimensional sign representation $\sgn$ of $\symm_n$, carries the same $\kk \symm_n$-module
    as the homology of the {\it complex of injective words} on $n$ letters.  Therefore, after tensoring with the sign character of $\symm_n$ or applying the fundamental involution $\omega$ on symmetric functions, parts (A), (C), (D) above correspond to \cite[Propositions 2.2, 2.1, 2.3]{ReinerWebb}. 
\end{remark}

\begin{remark} Hersh and the third author noted in \cite{HershReiner} that $\derangementrep_n$ occurs naturally
in the {\it representation stability and $FI$-module structure} (as in Church, Ellenberg and Farb \cite{CEF}) on the cohomology of the configuration space of $n$ labeled points in $\R^d$ for $d$ odd.  Specifically, $\derangementrep_n$ is
the $\kk\symm_n$-module on the subspace of {\it $FI$-module generators} for this cohomology, denoted $\widehat{\mathrm{Lie}}_n$ in \cite[Thms 1.2, 1.3]{HershReiner}.
\end{remark}

\begin{remark}
As hinted in the Introduction,
$\derangementrep_n$ also occurs as the $\kk \symm_n$-module on the kernel of 
    two shuffling operators on $\kk \symm_n$, both studied by Uyemura-Reyes: {\it random-to-top} shuffles \cite[\S 1.1.7, 3.2.2, 4.5.3]{Uyemura-Reyes} (also known as the {\it Tsetlin library}) and {\it random-to-random} shuffles \cite[Chap. 5]{Uyemura-Reyes};  see also Steinberg \cite[Prop. 14.5]{Steinberg} and Section~\ref{chamber-space-sec} below.
    More generally, Uyemura-Reyes described \cite[Thm. 4.1]{Uyemura-Reyes} the $\kk \symm_n$-module structure on the eigenspaces for {\it all} Bidigare-Hanlon-Rockmore  shuffling operators that carry $\symm_n$-symmetry.  Among these are random-to-top shuffles, whose eigenvalue multiplicities had previously been computed  by Phatarfod \cite{Phatarfod1991ONTM}, ignoring the $\kk\symm_n$-module structure.
    See also the discussion by Hanlon and Hersh in \cite[\S3]{HanlonHersh}
    and by Saliola, Welker and the third author in
    \cite[\S VI.9]{RSW}.
\end{remark}

\begin{remark}
In unpublished notes, Garsia \cite{Garsia} (see also Tian \cite{Tian}), studies the {\it top-to-random} shuffling operator, which is adjoint or transpose to the random-to-top operator. There he sketches a proof that its minimal polynomial is $X(X-1)(X-2)\cdots(X-n)$. The element $x$ acts as (rescaled) random-to-top on the chamber space of $\freelrb_n$ (see \cref{chamber-space-sec}). In light
of the fact that an operator and its transpose have the same minimal
polynomial, Garsia's sketch is closely related to the part of our proof of Theorem~\ref{semisimplicity-theorem}
dealing with $M=\kk\freelrb_n$.
\end{remark}

\section{Answering Question~\ref{second-main-invariant-theory-question}}
\label{main-results-section}

Our goal here is to answer Question~\ref{second-main-invariant-theory-question}, by describing the $\kk G$-module decompositions on the eigenspaces of $x, x^{(q)}$ as they act
on $\kk M$ for $M=\freelrb_n, \qfreelrb_n$.

Recall the ($\kk$-vector space) direct sum decompositions by length:
$$
\begin{array}{rcccl}
\kk \freelrb_n = 
 \bigoplus_{\ell = 0}^{n} \kk \freelrb_{n,\ell}
&\text{ where }&
\freelrb_{n,\ell}&:=\{\aa \in \freelrb_n: \ell(\aa)=\ell\},\\ [0.1in]
\kk \qfreelrb_n =
  \bigoplus_{\ell = 0}^{n} \kk \qfreelrb_{n,\ell}
&\text{ where }&
\qfreelrb_{n,\ell}&:=\{\AAA \in \qfreelrb_n: \ell(\AAA)=\ell\}.
\end{array}
$$
Following Brown \cite{BrownOnLRBs}, we call the monoid
elements of $\freelrb_{n,n}, \qfreelrb_{n,n}$ of maximum length {\it chambers}.  Their $\kk$-spans $\kk \freelrb_{n,n}$ and $\kk\qfreelrb_{n,n}$, which we call the {\it chamber spaces}, form submodules for the action of both the monoid algebras $\kk M$ and the group algebras $\kk G$. We first analyze the structure of these
chamber spaces in Section~\ref{chamber-space-sec},
and then use this to analyze the entire semigroup algebra $\kk M$ in Section~\ref{entire-semigroup-section}.

\subsection{The chamber spaces}
\label{chamber-space-sec}
 The chamber space $\kk \freelrb_{n,n}$ consists of all words of length $n$. Therefore, as a $\kk\symm_n$ module it is isomorphic to the  left regular-representation $\kk \symm_n$. Similarly, $\kk \qfreelrb_{n,n}$ has as a $\kk$-basis the set $\flags(V) = \{ \AAA=(A_1,\ldots,A_n) \}$
of all complete flags $A_1 \subset \cdots \subset A_{n-1} \subset A_n(=V)$, and is isomorphic to the coset representation of $GL_n$ on $\kk [GL_n/B]$.
 
We start with an old observation: multiplication by $x$ acts on $\freelrb_{n,n}$ as a (rescaled) version of the random-to-top operator on $\kk \symm_n$;
see for instance B. Steinberg \cite[Prop. 14.5]{Steinberg}. 

\begin{example} 
If $n=4$ and $w = (3,1,4,2)$ in $\freelrb_{4,4}$, then 
\[ 
\begin{aligned}
x \cdot w &=
((1)+(2)+(3)+(4)) \cdot (3,1,4,2) = 
(1,3,4,2)
+(2,3,1,4)
+(3,1,4,2)
+(4,3,1,2)
\end{aligned}
\]
which (after scaling by $\frac{1}{4}$) is
the result of random-to-top shuffling on $w$  as
an element of $\kk \symm_4$.
\end{example}

In this sense, the results in this section for the chamber space $\kk \freelrb_{n,n}$ are re-packaging previously mentioned results on random-to-top shuffling and the $\symm_n$-action on its eigenspaces, due to
Uyemura-Reyes \cite[Thm. 4.19]{Uyemura-Reyes}, building on the computation of Phatarfod \cite{Phatarfod1991ONTM} of the eigenvalue multiplicities.  On the other hand, as far as we aware, our results for the $q$-analogue $\kk \qfreelrb_{n,n}$ in Theorem \ref{chamber-eigenspaces-thm} are new.

We record here the action of $x^{(q)}$ on a complete
flag $\AAA$ in $V=(\F_q)^n$, using Definition~\ref{defn:q-freeLRB}:
$$
x^{(q)} \cdot \AAA=
\sum_{\text{lines }L \in V}
(L) \cdot \AAA = 
\sum_{\text{lines }L \in V}
(L, L+A_1, L+A_2, \ldots,L+A_{n-1},L+A_n)^{\wedge}.
$$

For $j = 0, 1, \ldots n$, we will write the $j$ and $[j]_q$-eigenspace of the chamber spaces
$\kk \freelrb_{n,n}, \kk \qfreelrb_{n,n}$ as
\[\ker\left( (x-j)|_{\kk \freelrb_{n,n}} \right), \hspace{3em} \ker\left( (x^{(q)}-[j]_q)|_{\kk \qfreelrb_{n,n}} \right). \]
In Theorem \ref{chamber-eigenspaces-thm} below, we relate these $j$ and $[j]_q$-eigenspaces to $\derangementrep_{n-j}$ and $\derangementrep^{(q)}_{n-j}$. Our proof depends crucially on Proposition~\ref{prop:operator}, Proposition~\ref{q-operator-proposition}, and  Lemma~\ref{Wachs-lemma} (all proved in Section~\ref{sec:constructingeigenvectors}) wherein we explicitly construct eigenvectors for the action of $x, x^{(q)}$ on the chamber spaces $\kk \freelrb_{n,n}, \kk \qfreelrb_{n,n}$ from the nullvectors of the same operators for smaller $n$. 

\begin{thm}
\label{chamber-eigenspaces-thm}
When $x, x^{(q)}$ act on $\kk \freelrb_{n,n}, \kk \qfreelrb_{n,n}$,
for each $j=0,1,2,\ldots,n$, their eigenspaces carry representations with
the same Frobenius map images
$$
\ch \ker\left( (x-j)|_{\kk \freelrb_{n,n}} \right)
= h_j \cdot \derangement_{n-j}
= \ch_q \ker\left( (x^{(q)}-[j]_q)|_{\kk \qfreelrb_{n,n}} \right).
$$
In other words, one has $\kk G$-module isomorphisms
$$
\begin{aligned}
\ker\left( (x-j)|_{\kk \freelrb_{n,n}} \right)
& \cong \triv_{\symm_j}* \derangementrep_{n-j} \\
\ker\left( (x^{(q)}-[j]_q)|_{\kk \qfreelrb_{n,n}} \right)
& \cong \triv_{GL_j}* \derangementrep^{(q)}_{n-j}.  
\end{aligned}
$$
\end{thm}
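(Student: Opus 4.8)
The plan is to reduce the statement to an equality of Frobenius images and then construct the eigenspaces, as $G$-modules, out of the nullspaces of the same operators on fewer letters. Since $\ch\colon\virtchars(\symm)\to\Lambda$ is an isomorphism, and since every $GL_n$-submodule of $\kk[GL_n/B]\cong\kk\qfreelrb_{n,n}$ is a sum of unipotent characters and hence lies in the domain of $\ch_q$, it is enough to prove the two identities
\[
\ch\ker\left((x-j)|_{\kk\freelrb_{n,n}}\right)=h_j\cdot\derangement_{n-j}=\ch_q\ker\left((x^{(q)}-[j]_q)|_{\kk\qfreelrb_{n,n}}\right);
\]
the claimed isomorphisms with $\triv_{\symm_j}*\derangementrep_{n-j}$ and $\triv_{GL_j}*\derangementrep^{(q)}_{n-j}$ follow at once, since $\ch(\triv_{\symm_j}*\derangementrep_{n-j})=h_j\cdot\ch(\derangementrep_{n-j})=h_j\derangement_{n-j}$ and similarly for $\ch_q$. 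I will run the argument for $\symm_n$; the $GL_n$ case is word-for-word the same after replacing $\symm_j\times\symm_{n-j}$ by the maximal parabolic $P_{j,n-j}$, ordinary induction by Harish-Chandra induction, $\binom{n}{j}$ by $\qbinom{n}{j}{q}$, $d_m$ by $d_m(q)$, and $\freelrb$ by $\qfreelrb$, using Proposition~\ref{q-operator-proposition} in place of Proposition~\ref{prop:operator}.

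The case $j=0$ is precisely the identification of the nullspace provided, for every number of letters, by Lemma~\ref{Wachs-lemma} and its $q$-analogue, which give $\ch\ker(x|_{\kk\freelrb_{m,m}})=\derangement_m=\ch_q\ker(x^{(q)}|_{\kk\qfreelrb_{m,m}})$. For general $j$ I would use Proposition~\ref{prop:operator} to turn nullvectors into eigenvectors. Realize $\freelrb_{n-j,n-j}$ inside $\freelrb_{n,n}$ as the words on the letters $\{j+1,\dots,n\}$; the construction then attaches to each $v\in\ker(x|_{\kk\freelrb_{n-j,n-j}})$ a vector $\Phi(v)\in\kk\freelrb_{n,n}$ that lies in $\ker((x-j)|_{\kk\freelrb_{n,n}})$. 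The two features to extract from Proposition~\ref{prop:operator} are: (i) the linear map $\Phi$ is injective, and in fact $\{\,w\cdot\Phi(v)\,\}$ stays linearly independent as $w$ ranges over coset representatives of $\symm_j\times\symm_{n-j}$ in $\symm_n$ and $v$ over a basis of the nullspace, which should follow from a leading-term or triangularity property of the construction; and (ii) $\Phi$ is equivariant in the right sense, namely it intertwines the $\symm_{n-j}$-action permuting $\{j+1,\dots,n\}$ with the given action on $\ker(x|_{\kk\freelrb_{n-j,n-j}})$, and $\Phi(v)$ is fixed by the $\symm_j$ permuting $\{1,\dots,j\}$. By the adjunction between restriction and induction, (ii) promotes $\Phi$ to a $\symm_n$-equivariant map
\[
\widehat\Phi\colon\ \triv_{\symm_j}*\ker\left(x|_{\kk\freelrb_{n-j,n-j}}\right)\ \longrightarrow\ \ker\left((x-j)|_{\kk\freelrb_{n,n}}\right),\qquad w\otimes v\mapsto w\cdot\Phi(v),
\]
which is injective by (i) (note that injectivity here is not automatic, since $\widehat\Phi$ is the composite of $\mathrm{Ind}$ applied to the injective bottom map with the non-injective counit).

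To conclude, observe that over $j=0,1,\dots,n$ the targets of the $\widehat\Phi$ are the distinct $x$-eigenspaces on $\kk\freelrb_{n,n}=\kk\symm_n$, which are in direct sum and exhaust $\kk\symm_n$ by Theorem~\ref{semisimplicity-theorem}(ii). Summing dimensions of the sources,
\[
\sum_{j=0}^n\dim_\kk\left(\triv_{\symm_j}*\ker(x|_{\kk\freelrb_{n-j,n-j}})\right)=\sum_{j=0}^n\binom{n}{j}d_{n-j}=n!=\dim_\kk\kk\symm_n,
\]
where the middle equality is the first line of \eqref{chamber-space-dime-is-sum-of-eigenespace-dims}, equivalently the degree shadow of Proposition~\ref{derangement-symmetric-function-prop}(C) via \eqref{regular-rep-Frobenius-image} and \eqref{induction-product-degree-formula}. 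Hence every $\widehat\Phi$ is an isomorphism, and applying $\ch$ together with the $j=0$ case gives $\ch\ker((x-j)|_{\kk\freelrb_{n,n}})=h_j\cdot\derangement_{n-j}$; the $q$-statement follows identically using \eqref{flag-rep-Frobenius-image} and the second line of \eqref{chamber-space-dime-is-sum-of-eigenespace-dims}.

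The main obstacle, as usual with these eigenvector constructions, is point (ii): reading off the precise equivariance of $\Phi$ from the explicit formula in Proposition~\ref{prop:operator}, so that $\widehat\Phi$ is genuinely $\symm_n$-equivariant, and --- in the $GL_n$ case --- the additional check that $\Phi(v)$ is fixed by the unipotent radical of $P_{j,n-j}$, so that $\widehat\Phi$ factors through the inflation used to define Harish-Chandra induction. Establishing the triangularity needed for the injectivity in (i) is the other delicate point; once both are in place, the remainder is the forced dimension count above.
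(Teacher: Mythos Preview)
Your overall architecture matches the paper's: construct, for each $j$, a $G$-equivariant injection from the induction of the nullspace on $n-j$ letters into the $j$-eigenspace on $n$ letters, then force it to be an isomorphism by a global dimension count. Your map $\widehat\Phi$ is exactly Lemma~\ref{Wachs-lemma}, and the injectivity and equivariance issues you flag in (i) and (ii) are settled in its proof: for distinct $U$ the images $\Psi_U(v)$ are supported on disjoint sets of chambers, so the sum over $U$ is direct (no triangularity argument needed), and the parabolic equivariance---including triviality on the unipotent radical in the $q$-case---is read off directly from the formula for $\Psi_U^{(q)}$.

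The genuine gap is your handling of $j=0$. You assert that Lemma~\ref{Wachs-lemma} ``give[s] $\ch\ker(x|_{\kk\freelrb_{m,m}})=\derangement_m$'', but it does not: at $j=0$ that lemma only says the nullspace injects into itself, which is vacuous. The identification $\ker(x|_{\kk\freelrb_{m,m}})\cong\derangementrep_m$ is the $j=0$ instance of the very theorem you are proving, so invoking it is circular---and without it you have no justification for inserting $d_{n-j}$ into your dimension count. The paper breaks the circle by importing, as an external input, the nullspace dimensions $\dim_\kk\ker(x|_{\kk\freelrb_{m,m}})=d_m$ from Phatarfod and $\dim_\kk\ker(x^{(q)}|_{\kk\qfreelrb_{m,m}})=d_m(q)$ from Brown; only with those in hand does the count $\sum_j\binom{n}{j}d_{n-j}=n!$ force every injection to be an isomorphism. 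Even then, one has established only $\ker((x-j)|_{\kk\freelrb_{n,n}})\cong\triv_{\symm_j}*\ker(x|_{\kk\freelrb_{n-j,n-j}})$, and a final step you omit is still required to identify the Frobenius image: summing over $j$ gives $h_1^n=\sum_m h_{n-m}K_m$ with $K_m:=\ch\ker(x|_{\kk\freelrb_{m,m}})$, and comparison with Proposition~\ref{derangement-symmetric-function-prop}(C) then forces $K_m=\derangement_m$ by induction on $m$.
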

\begin{proof}

Lemma~\ref{Wachs-lemma} below exhibits $G$-equivariant
injections 
\begin{equation}
\label{eignespace-injections}
\begin{aligned}
\triv_{\symm_j} *\ker(x|_{\kk  \freelrb_{n-j,n-j}}) 
& \hookrightarrow \ker\left( (x-j)|_{\kk \freelrb_{n,n}} \right),\\
\triv_{GL_j} *\ker(x^{(q)}|_{\kk \qfreelrb_{n-j,n-j}}) 
& \hookrightarrow \ker\left( (x^{(q)}-[j]_q)|_{\kk \qfreelrb_{n,n}} \right).\\
\end{aligned}
\end{equation}

We now use facts proven by Phatarfod \cite{Phatarfod1991ONTM} for $q=1$ and by Brown \cite[\S 5.2]{BrownOnLRBs} 
for the $q$-analogue\footnote{A minor discrepancy here is that Brown analyzes the action of
$x^{(q)}$ not on the chamber space of $\kk\qfreelrb_n$ itself,
but rather on the chamber space of the quotient $\kk\overline{\freelrb}^{(q)}_n$ discussed in
Remark~\ref{three-q-analogues-warning-remark} above.  However,
just as Brown points out for $\freelrb_n$ and $\overline{\freelrb}_n$ in \cite[Remark, p. 888]{BrownOnLRBs},
the bijection 
$
(A_1, A_2,\ldots,A_{n-1},V) \mapsto
(A_1,A_2,\ldots,A_{n-1})
$
between chambers of $\freelrb^{(q)}_n$ and those of
$\kk\overline{\freelrb}^{(q)}_n$ will commute with both
the action of $GL_n$ and with multiplication by $x^{(q)}$.}:

$$
\begin{aligned}
\dim_\kk \ker(x|_{\kk\freelrb_{n,n}} )&=d_n,\\
\dim_\kk \ker(x^{(q)}|_{\kk \qfreelrb_{n,n}} )&=d_n(q).\\
\end{aligned}
$$
Hence the spaces on the left sides in \eqref{eignespace-injections}
have dimensions $d_{n-j}\binom{n}{j}$ and $d_{n-j}(q)\qbinom{n}{j}{q}$, respectively.
However, since eigenspaces for distinct eigenvalues are always linearly independent, and since
$$
\begin{aligned}
\kk \freelrb_{n,n} &\cong \kk \symm_n,\\
\kk \qfreelrb_{n,n} &\cong \kk[ GL_n/B],
\end{aligned}
$$
have dimensions $n!, [n]!_q$,
equations \eqref{chamber-space-dime-is-sum-of-eigenespace-dims}
imply that the injections in \eqref{eignespace-injections} must all be isomorphisms.

It also follows from the above analysis, or from Theorem~\ref{semisimplicity-theorem}(ii), that
$$
\begin{aligned}
\kk\freelrb_{n,n} &= \bigoplus_{j=0}^n \ker\left( (x-j)|_{\kk \freelrb_{n,n}} \right)\\
\kk\qfreelrb_{n,n} &= \bigoplus_{j=0}^n \ker\left( (x^{(q)}-[j]_q)|_{\kk \qfreelrb_{n,n}} \right).
\end{aligned}
$$
Then using \eqref{regular-rep-Frobenius-image}, \eqref{flag-rep-Frobenius-image} and comparing with Proposition~\ref{derangement-symmetric-function-prop}(C), the theorem follows.
\end{proof}

\subsubsection{Constructing eigenvectors from nullvectors: proof of
Lemma~\ref{Wachs-lemma}.}\label{sec:constructingeigenvectors}
The goal of this subsection is to prove Lemma~\ref{Wachs-lemma}.  It relies on parallel constructions\footnote{The third author is grateful to Michelle Wachs for explaining to him the $\kk \freelrb_n$ version of this construction (the operator $\Psi_U$) in 2002, in the context of random-to-top shuffling.} of eigenvectors for $x, x^{(q)}$ acting on the spaces $\kk \freelrb_{n,n},
\kk \qfreelrb_{n,n}$ from nullvectors for the same operators for
smaller $n$.

\begin{definition}\rm
Let $[n]:=\{1,2,\ldots,n\}$, and
fix a $j$-element subset $U$ 
of $\{1,2,\ldots,n\}$.
Let $\symm_{[n]\setminus U}$ denote the permutations
$\aa=(a_1,a_2,\ldots,a_{n-j})$
of the complementary subset $[n] \setminus U$,
written in one-line notation.  On the $\kk$-vector space 
$\kk[\symm_{[n]\setminus U}]$ having these permutations
as a $\kk$-basis, define two maps $\Psi_U,\Phi_U: \kk[\symm_{[n]\setminus U}]
\longrightarrow \kk[\symm_n]$
by extending these rules $\kk$-linearly:
$$
\begin{aligned}
\Psi_U(\aa)&:=
\displaystyle\sum_{ \bb \in \symm_U} 
(b_1,b_2,\ldots,b_j,a_1,a_2,\ldots,a_{n-j})\\
\Phi_U(\aa)&:=
\displaystyle\sum_{ \bb \in \symm_U} 
(a_1,b_1,b_2,\ldots,b_j,a_2,\ldots,a_{n-j})
\end{aligned}
$$
where the summation indices $\bb$
run over all permutations
$\bb=(b_1,b_2,\ldots,b_j)$
in $\symm_U$.
\end{definition}

\begin{example}
Let $n=5$ and $U=\{4,5\}$.  Then 
$$
\begin{aligned}
\Psi_{U}((1,2,3)) &= (\mathbf{4,5},1,2,3) + (\mathbf{5,4},1,2,3),\\
\Phi_{U}((1,2,3)) &= (1,\mathbf{4,5},2,3) + (1,\mathbf{5,4},2,3).\\
\end{aligned}
$$
\end{example}

To state the next proposition, introduce
for $U \subseteq [n]$ the {\it free left-regular band $\freelrb_U$ on $U$}, having an obvious isomorphism $\freelrb_U \cong \freelrb_j$ if $j=|U|$.  Also let 
$
x_U:=\sum_{i \in U} (i)
$
inside $\kk \freelrb_U$. 

\begin{prop}\label{prop:operator}
Fix a $j$-element subset $U$ of $[n]$ and
a permutation $\aa$ in $\symm_{[n]\setminus U}$.
Then 
$$
x \cdot \Psi_{U}(\aa) 
= j \cdot \Psi_{U}(\aa) 
 + \Phi_{U}\left( x_{[n]\setminus U} \cdot \aa \right).
$$
Consequently, if $v$ in $\kk \freelrb_{[n]\setminus U,n-j}$
has $x_{[n]\setminus U} \cdot v=0$, then
$\Psi_{U}(v)$ is a $j$-eigenvector for $x$ 
on $\kk \freelrb_{n,n}$:
$$
x \cdot \Psi_{U}(v) = j \cdot \Psi_{U}(v). 
$$
\end{prop}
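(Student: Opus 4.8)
The plan is to split $x$ into the two pieces $x = x_U + x_{[n]\setminus U}$, where $x_U:=\sum_{i\in U}(i)$ and $x_{[n]\setminus U}:=\sum_{i\notin U}(i)$, and to prove separately that $x_U\cdot\Psi_U(\aa)=j\cdot\Psi_U(\aa)$ and $x_{[n]\setminus U}\cdot\Psi_U(\aa)=\Phi_U(x_{[n]\setminus U}\cdot\aa)$; adding these two identities gives the displayed formula. Everything rests on one elementary observation about products in $\freelrb_n$: if $w=(c_1,\ldots,c_n)$ is a chamber (a word using each letter of $[n]$ exactly once) and $i\in[n]$, then since $i$ already occurs in $w$, the product $(i)\cdot w=(i,c_1,\ldots,c_n)^\wedge$ deletes the later copy of $i$, so $(i)\cdot w$ is simply the chamber obtained by moving $i$ to the front. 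Since every word in the support of $\Psi_U(\aa)$ is a chamber, this is all we will ever use.

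For the $x_U$-piece: fix $\bb\in\symm_U$ and $i\in U$, and write $i=b_k$. Then $(i)\cdot(b_1,\ldots,b_j,a_1,\ldots,a_{n-j})$ moves $b_k$ to the front, producing $(b_k,b_1,\ldots,b_{k-1},b_{k+1},\ldots,b_j,a_1,\ldots,a_{n-j})$, whose first $j$ entries still form a permutation $\bb'$ of $U$ and whose last $n-j$ entries are unchanged. Conversely, given a target permutation $\bb'=(b'_1,\ldots,b'_j)$ of $U$, the pairs $(\bb,i)$ producing it are exactly $i=b'_1$ together with the $j$ possible slots in which to reinsert $b'_1$ into the $(j-1)$-term sequence $(b'_2,\ldots,b'_j)$; hence each $\bb'$ arises exactly $j$ times. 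Summing over $\bb\in\symm_U$ and $i\in U$ therefore yields $x_U\cdot\Psi_U(\aa)=j\sum_{\bb'\in\symm_U}(b'_1,\ldots,b'_j,a_1,\ldots,a_{n-j})=j\cdot\Psi_U(\aa)$.

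For the $x_{[n]\setminus U}$-piece: fix $\bb\in\symm_U$ and $i\notin U$, and write $i=a_k$ (possible since $\aa$ permutes $[n]\setminus U$). Moving $a_k$ to the front of $(b_1,\ldots,b_j,a_1,\ldots,a_{n-j})$ gives $(a_k,b_1,\ldots,b_j,a_1,\ldots,a_{k-1},a_{k+1},\ldots,a_{n-j})$. On the other hand, inside $\kk\freelrb_{[n]\setminus U}$ the chamber $\aa=(a_1,\ldots,a_{n-j})$ satisfies $(a_k)\cdot\aa=(a_k,a_1,\ldots,a_{k-1},a_{k+1},\ldots,a_{n-j})$, and applying $\Phi_U$ to this word inserts the block $(b_1,\ldots,b_j)$, summed over $\bb\in\symm_U$, immediately after the leading entry $a_k$. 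Comparing, $\sum_{\bb\in\symm_U}(i)\cdot(b_1,\ldots,b_j,a_1,\ldots,a_{n-j})=\Phi_U((a_k)\cdot\aa)$, and summing over $i\notin U$ (equivalently over $k=1,\ldots,n-j$) gives $x_{[n]\setminus U}\cdot\Psi_U(\aa)=\Phi_U(x_{[n]\setminus U}\cdot\aa)$.

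Adding the two pieces establishes the identity; since both sides are $\kk$-linear in $\aa$, it extends to all of $\kk[\symm_{[n]\setminus U}]=\kk\freelrb_{[n]\setminus U,n-j}$, and in particular to a nullvector $v$ of $x_{[n]\setminus U}$, for which $\Phi_U(x_{[n]\setminus U}\cdot v)=\Phi_U(0)=0$, leaving $x\cdot\Psi_U(v)=j\cdot\Psi_U(v)$. I expect the only genuinely delicate point to be the bijective bookkeeping in the $x_U$-piece, i.e.\ confirming that every permutation of $U$ in the output appears with multiplicity exactly $j$; tracking the $(-)^\wedge$ operation is painless here precisely because we only ever left-multiply chambers by single letters.
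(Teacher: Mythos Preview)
Your proof is correct and follows essentially the same approach as the paper: both split $x=x_U+x_{[n]\setminus U}$ and handle the two pieces separately, with the $x_{[n]\setminus U}$-piece done by the identical direct calculation. The only cosmetic difference is that for the $x_U$-piece the paper invokes the already-proven identity $x\cdot x_j=j\cdot x_j$ in $\kk\freelrb_U\cong\kk\freelrb_j$ (the top-degree case of Lemma~\ref{x-multiplication}), whereas you reprove that special case by an explicit bijective count---the two arguments are the same under the hood.
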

\begin{proof}
One can calculate as follows:
$$
\begin{array}{rcccl}
x \cdot \Psi_{U}(\aa) 
= \displaystyle\sum_{i=1}^n (i) \cdot \Psi_{U}(\aa)
& = &\displaystyle\sum_{i \in U } (i) \cdot \Psi_{U}(\aa)&
+ &\displaystyle\sum_{i \in [n] \setminus U} (i) \cdot \Psi_{U}(\aa)\\
& & & & \\
&=&j \cdot \Psi_{U}(\aa) & + & \Phi_{U}\left( x_{[n]\setminus U} \cdot \aa \right)
\end{array}
$$
where we explain here the two substitutions in the last equality.  The fact that the left sum equals
$j \cdot \Psi_{U}(\aa)$
follows from the last equation $x \cdot x_j = j \cdot x_j$ in 
Lemma~\ref{x-multiplication}
applied to $\kk \freelrb_{U} \cong \kk \freelrb_j$.
The fact that the right sum is 
$\Phi_{U}\left( x_{[n]\setminus U} \cdot \aa \right)$
follows via direct calculation from the definitions.
\end{proof}

We next introduce two $q$-analogous maps $\Psi_U^{(q)}, \Phi_U^{(q)}$.

\begin{definition}\rm
Fix $U$ a $j$-dimensional $\F_q$-linear subspace of $V=(\F_q)^n$. Let $\flags(V/U)$ denote the set of maximal flags in the quotient space $V/U$
$$
\AAA=(A_1,A_2\ldots,A_{n-j-1},\underbrace{A_{n-j}}_{=V/U}).
$$
On the space 
$\kk[\flags(V/U)]$ with these flags
as $\kk$-basis, define
maps $\Psi^{(q)}_U,\Phi^{(q)}_U: \kk[\flags(V/U)]
\longrightarrow \kk[\flags(V)]$
by extending the following rules $\kk$-linearly:
$$
\begin{aligned}
\Psi^{(q)}_U(\AAA)&:=
\displaystyle\sum_{ \BBB \in \flags(U)} 
(B_1,B_2,\ldots,B_{j-1},U,
A_1+U,A_2+U,\ldots,A_{n-j-1}+U,V),\\
\Phi^{(q)}_U(\AAA)&:=
\displaystyle
\sum_{\substack{\text{lines }L:\\L \subset U+A_1,\\ L \not\subset U}} \quad
\sum_{ \BBB \in \flags(U)} 
(L,L+B_1,\ldots,L+B_{j-1},\underbrace{L+U}_{=L+U+A_1},
L+U+A_2,\ldots,L+U+A_{n-j-1},V),
\end{aligned}
$$
where the summation indices $\BBB$
run over all complete flags
$\BBB=(B_1,\ldots,B_{j-1},U)$
in $\flags(U)$.
\end{definition}

To state the next proposition, introduce
for any $\F_q$-vector space $U$ of dimension $j$ 
the monoid $\qfreelrb_U \cong \qfreelrb_j$ by identifying $U \cong \F_q^j$.  Also introduce the element of the monoid algebra $\kk \qfreelrb_U$
$$
x^{(q)}_U:=\sum_{\text{lines }L\text{ in }U} (L).
$$

\begin{prop} 
\label{q-operator-proposition}
For a $j$-dimensional subspace $U$ of $V=(\F_q)^n$ and
complete flag $\AAA$ in $\flags(V/U)$,
$$
x^{(q)} \cdot \Psi^{(q)}_{U}(\AAA) 
= [j]_q \cdot \Psi^{(q)}_{U}(\AAA) 
 +  \Phi^{(q)}_{U}\left( x^{(q)}_{V/U} \cdot \AAA \right).
$$
Hence if $v$ in $\kk \qfreelrb_{V/U,n-j}$
has $x^{(q)}_{V/U} \cdot v=0$, then
$\Psi^{(q)}_{U}(v)$ is a $[j]_q$-eigenvector for $x^{(q)}$ 
on $\kk \qfreelrb_{n,n}$:
$$
x^{(q)} \cdot \Psi^{(q)}_{U}(v) = [j]_q \cdot \Psi^{(q)}_{U}(v).
$$
\end{prop}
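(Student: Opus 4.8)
The proof will be a $q$-analogue of the proof of Proposition~\ref{prop:operator}, running in perfect parallel. I would split the action of $x^{(q)}=\sum_{\text{lines }L\text{ in }V}(L)$ on $\Psi^{(q)}_U(\AAA)$ according to whether the line $L$ lies inside $U$ or not:
\[
x^{(q)}\cdot \Psi^{(q)}_U(\AAA)
=\sum_{\substack{\text{lines }L:\\ L\subset U}}(L)\cdot \Psi^{(q)}_U(\AAA)
\;+\;\sum_{\substack{\text{lines }L:\\ L\not\subset U}}(L)\cdot \Psi^{(q)}_U(\AAA).
\]
The first sum I expect to evaluate to $[j]_q\cdot \Psi^{(q)}_U(\AAA)$. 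The point is that each term $(B_1,\ldots,B_{j-1},U,A_1+U,\ldots,V)$ in $\Psi^{(q)}_U(\AAA)$ begins with a complete flag of $U$, so multiplying on the left by a line $L\subset U$ and applying the $\wedge$-operation only modifies the initial $U$-flag part; this is exactly the identity $x^{(q)}_U\cdot x^{(q)}_{j-1}$-type computation, i.e.\ the last formula $x^{(q)}\cdot x^{(q)}_\ell=[\ell]_q x^{(q)}_\ell+\cdots$ of Lemma~\ref{x-multiplication} applied inside $\kk\qfreelrb_U\cong\kk\qfreelrb_j$ at length $\ell=j$, where the ``$+q^\ell x_{\ell+1}$'' term vanishes because $U$ is already the full ambient space for that sub-band. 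So the bookkeeping there is: for $L\subset U$, the flag $(L,L+B_1,\ldots,L+B_{j-1},U=L+U,A_1+U,\ldots)$ has, after $\wedge$, the same ``tail'' $U,A_1+U,\ldots,V$, and as $(L,\BBB)$ ranges appropriately we recover each term of $\Psi^{(q)}_U(\AAA)$ exactly $[j]_q$ times.

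**The second sum.** For lines $L\not\subset U$, I claim the sum equals $\Phi^{(q)}_U\!\big(x^{(q)}_{V/U}\cdot \AAA\big)$, and this is where the real content is. Left-multiplying a $\Psi^{(q)}_U(\AAA)$-term by $(L)$ gives, before $\wedge$,
\[
(L,\,L+B_1,\,\ldots,\,L+B_{j-1},\,L+U,\,L+(A_1+U),\,\ldots,\,L+(A_{n-j-1}+U),\,V).
\]
Since $L\not\subset U$ but $\dim L=1$, we have $\dim(L+U)=j+1$, so the prefix $L,L+B_1,\ldots,L+B_{j-1},L+U$ is a genuine strictly-increasing flag of length $j+1$ and nothing is deleted there; moreover $L+(A_i+U)=L+U+A_i$. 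Thus after $\wedge$ the surviving term is governed by whether $L+U+A_1=L+U$ (i.e.\ $L\subset U+A_1$) or not, matching precisely the cases arising when one computes $x^{(q)}_{V/U}\cdot\AAA$ inside $\kk\qfreelrb_{V/U}$: a line $\bar L$ in $V/U$ with $\bar L\subset A_1$ contributes to the ``$[1]_q$-part'' (here the relevant fact is $x^{(q)}_{V/U}\cdot\AAA=[0]_q\cdot(\text{stuff})+\ldots$, but since $\AAA$ has length $n-j$ in a space of dimension $n-j$, the operative part is the ``$x\cdot x_\ell=\ell x_\ell+x_{\ell+1}$'' with the second term dropping out as the top is full). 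Concretely, I would match lines $L\subset U+A_1$, $L\not\subset U$ in $V$ with lines $\bar L=(L+U)/U\subset A_1/U$-type data — more precisely with the line-choices implicit in the definition of $\Phi^{(q)}_U$ — and check that the flags produced on the two sides coincide termwise. The bijection is essentially the quotient map $V\supset U+A_1 \to (U+A_1)/U$, whose fibers over nonzero vectors have size $q^{\,?}$; one has to confirm the multiplicities work out so that summing over $L$ reproduces exactly one copy of each $\Phi^{(q)}_U$-term applied to each term of $x^{(q)}_{V/U}\cdot\AAA$.

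**Main obstacle, and the conclusion.** The hard part will be the second sum: unwinding the $\wedge$-operation on a flag built from three interleaved pieces ($L$, a flag of $U$, and the pushed-forward flag of $V/U$), and verifying that the lines $L\not\subset U$ organize into the double sum $\sum_{L\subset U+A_1,\,L\not\subset U}\sum_{\BBB\in\flags(U)}$ appearing in the definition of $\Phi^{(q)}_U$ — together with the correct handling of the ``$L\subset U+A_1$ vs.\ not'' dichotomy so that exactly the terms of $x^{(q)}_{V/U}\cdot\AAA$ (and no others) appear, with the right coefficients. I expect this to be a careful but elementary linear-algebra-over-$\F_q$ computation, entirely parallel to (though bulkier than) the underlined-insertion bookkeeping in the proof of Lemma~\ref{x-multiplication} and Proposition~\ref{prop:operator}. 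Once the displayed identity
\[
x^{(q)}\cdot \Psi^{(q)}_U(\AAA)=[j]_q\cdot\Psi^{(q)}_U(\AAA)+\Phi^{(q)}_U\!\big(x^{(q)}_{V/U}\cdot\AAA\big)
\]
is established on basis elements $\AAA$, it extends $\kk$-linearly to all of $\kk[\flags(V/U)]$, and the ``Hence'' clause is immediate: if $x^{(q)}_{V/U}\cdot v=0$ then the second term dies and $\Psi^{(q)}_U(v)$ is a $[j]_q$-eigenvector of $x^{(q)}$ on $\kk\qfreelrb_{n,n}$.
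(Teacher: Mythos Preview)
Your plan is correct and is exactly the paper's approach: split $x^{(q)}=\sum_L(L)$ according to whether $L\subset U$ or not, identify the first piece with $[j]_q\cdot\Psi^{(q)}_U(\AAA)$ via the last case of Lemma~\ref{x-multiplication} inside $\kk\qfreelrb_U\cong\kk\qfreelrb_j$, and verify the second piece equals $\Phi^{(q)}_U\bigl(x^{(q)}_{V/U}\cdot\AAA\bigr)$ by direct computation.

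One organizational point: your discussion of the second sum gets tangled around the ``$L\subset U+A_1$ vs.\ not'' dichotomy, and the phrases ``$\bar L\subset A_1/U$'' and ``quotient map $U+A_1\to(U+A_1)/U$'' are not quite the right objects. The paper avoids any case analysis by expanding the \emph{other} side,
\[
\Phi^{(q)}_U\bigl(x^{(q)}_{V/U}\cdot\AAA\bigr)
=\sum_{\substack{\text{lines }\bar L\\\text{ in }V/U}}\Phi^{(q)}_U\bigl((\bar L)\cdot\AAA\bigr),
\]
and observing that when $\Phi^{(q)}_U$ is applied to a flag whose first entry is $\bar L$, its outer sum runs over precisely the lines $L$ in $V$ with $L\not\subset U$ and $(L+U)/U=\bar L$. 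As $\bar L$ ranges over all lines of $V/U$, this hits every line $L\not\subset U$ exactly once, and the flags match term-by-term because the ``where does $\wedge$ bite'' case analysis you anticipate is already encoded in the $\wedge$ inside $(\bar L)\cdot\AAA$. So no separate dichotomy or multiplicity check is needed.
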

\begin{proof}
One can calculate as follows:
$$
\begin{array}{rcccl}
x^{(q)} \cdot \Psi^{(q)}_{U}(\AAA) 
= \displaystyle\sum_{\substack{\text{lines }L \\\text{ in }V}} (L) \cdot \Psi^{(q)}_{U}(\AAA)
& = &\displaystyle\sum_{\substack{\text{lines }L\\\text{ in }U}} (L) \cdot \Psi^{(q)}_{U}(\AAA)&
+ &\displaystyle\sum_{\substack{\text{lines }L\\\text{ not in }U}} (L) \cdot \Psi^{(q)}_{U}(\AAA)\\
& & & & \\
&=&[j]_q \cdot \Psi^{(q)}_{U}(\AAA) & + & \Phi^{(q)}_{U}\left( x^{(q)}_{V/U} \cdot \AAA \right)
\end{array}
$$
where we explain here the two substitutions in the last equality. The fact that the left sum equals
$[j]_q \cdot \Psi^{(q)}_{U}(\AAA)$
follows from the last equation $x^{(q)} \cdot x^{(q)}_j = [j]_q \cdot x^{(q)}_j$ in 
Lemma~\ref{x-multiplication}
applied to $\kk \qfreelrb_{U} \cong \kk \qfreelrb_j$.  To check the substitution made for
the the right sum, one calculates directly that
$$
\begin{aligned}
&\Phi^{(q)}_{U}\left( x^{(q)}_{V/U} \cdot \AAA \right)\\
& =\sum_{\substack{\text{lines }\overline{L}\\\text{ in }V/U}}
 \Phi^{(q)}_{U}\left(  (\overline{L}) \cdot \AAA \right)\\
& =\sum_{\substack{\text{lines }\overline{L}\\\text{ in }V/U}}
 \Phi^{(q)}_{U}\left( 
  (\overline{L},\overline{L}+A_1,\overline{L}+A_2,\ldots,\overline{L}+A_{n-j-1},V/U)^{\wedge}
 \right)\\
&=\sum_{\substack{\text{lines }\overline{L}\\\text{ in }V/U}} \quad 
\sum_{
\substack{
\text{lines }L\text{ in }V:\\
L \subset U+\overline{L}\\
L \not\subset U}} \quad 
\sum_{\BBB \in \flags(U)}
  (L,L+B_1,\ldots,L+B_{j-1},\underbrace{L+B_j}_{=L+U},L+U+A_1,\ldots,L+U+A_{n-j-1},V)^{\wedge}\\
& =
\sum_{\substack{
\text{lines }L\text{ in }V:\\
L \not\subset U}} \quad 
\sum_{\BBB \in \flags(U)} \quad 
  (L) \cdot (B_1,\ldots,B_{j-1},\underbrace{B_j}_{=U},A_1+U,A_2+U,\ldots,A_{n-j-1}+U,V)
=
\sum_{\substack{\text{lines }L\\\text{ not in }U}} (L) \cdot \Psi^{(q)}_{U}(\AAA).\qedhere
\end{aligned}
$$
\end{proof}

We are at last ready to prove Lemma~\ref{Wachs-lemma}.

\begin{lemma}
\label{Wachs-lemma}
With our usual notation of $G=\symm_n, GL_n$ acting on $\kk M$
for $M=\freelrb_n, \qfreelrb_n$, one has $G$-equivariant
injections for $j=0,1,2,\ldots,n$:
$$
\begin{aligned}
\triv_{\symm_j} * \ker(x|_{\kk \freelrb_{n-j,n-j}})  
& \hookrightarrow \ker\left( (x-j)|_{\kk \freelrb_{n,n}} \right),\\
\triv_{GL_j} * \ker(x^{(q)}|_{\kk \qfreelrb_{n-j,n-j}})  
& \hookrightarrow \ker\left( (x^{(q)}-[j]_q)|_{\kk \qfreelrb_{n,n}} \right).\\
\end{aligned}
$$
\end{lemma}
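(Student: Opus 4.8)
The plan is to build both injections out of the eigenvector constructions of Propositions~\ref{prop:operator} and~\ref{q-operator-proposition}, using Frobenius reciprocity, with the two cases running in parallel. I describe the $\symm_n$ case; the $GL_n$ case is the same modulo bookkeeping.

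First I would fix $U_0 := \{n-j+1,\ldots,n\}$, so that $\freelrb_{[n]\setminus U_0} = \freelrb_{n-j}$ literally and $x_{[n]\setminus U_0} = x$, and set $H := \mathrm{Stab}_{\symm_n}(U_0) = \symm_{U_0}\times\symm_{[n]\setminus U_0} \cong \symm_j\times\symm_{n-j}$. The key observation is that the restriction of $\Psi_{U_0}$ to $N := \ker\bigl(x|_{\kk\freelrb_{n-j,n-j}}\bigr)$ is an $H$-equivariant \emph{injection} of $\triv_{\symm_j}\otimes N$ --- with $\symm_{U_0}$ acting trivially and $\symm_{[n]\setminus U_0}$ acting as on the chamber space $\kk\freelrb_{n-j,n-j}$ --- into the eigenspace $\ker\bigl((x-j)|_{\kk\freelrb_{n,n}}\bigr)$ viewed as an $H$-module. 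Indeed it lands in that eigenspace by Proposition~\ref{prop:operator}; it is $\symm_{U_0}$-invariant since relabelling the letters of $U_0$ merely permutes the summation index $\bb$ in the definition of $\Psi_{U_0}$; it intertwines the two $\symm_{[n]\setminus U_0}$-actions since that relabelling only touches the $a_i$'s; and it is injective because the trailing $n-j$ letters of every word in $\Psi_{U_0}(\aa)$ read off $\aa$. The induction--restriction adjunction then extends this uniquely to a $\symm_n$-equivariant map $\Theta\colon\triv_{\symm_j}*N = (\triv_{\symm_j}\otimes N)\uparrow_H^{\symm_n}\to\ker\bigl((x-j)|_{\kk\freelrb_{n,n}}\bigr)$, the candidate injection. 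The $GL_n$ version is entirely analogous: fix a $j$-dimensional subspace $U_0\subset V$, let $H := \mathrm{Stab}_{GL_n}(U_0)$ be the maximal parabolic with Levi quotient $GL(U_0)\times GL(V/U_0)\cong GL_j\times GL_{n-j}$, and check that $\Psi^{(q)}_{U_0}$ restricted to $\ker\bigl(x^{(q)}_{V/U_0}|_{\kk\qfreelrb_{V/U_0,n-j}}\bigr)$ is an $H$-equivariant injection from the inflation of $\triv_{GL_j}\otimes(-)$ along $H\twoheadrightarrow GL_j\times GL_{n-j}$ into the $[j]_q$-eigenspace --- the $\triv_{GL_j}$ factor arising because an element of $H$ fixes $U_0$ and only permutes the index $\BBB\in\flags(U_0)$, acting on the remaining subspaces $A_i+U_0$ through its image in $GL(V/U_0)$ --- and then apply the adjunction, using Proposition~\ref{q-operator-proposition} in place of Proposition~\ref{prop:operator}.

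The step I expect to be the main obstacle is showing $\Theta$ is injective, since Frobenius reciprocity does not a priori preserve injectivity; I would handle it by identifying the image of $\Theta$ explicitly. From the identity $g\cdot\Psi_{U_0}(v) = \Psi_{gU_0}(g\cdot v)$ (where $g\cdot v$ denotes the relabelled nullvector, now killed by $x_{[n]\setminus gU_0}$) and the fact that $gU_0$ runs over all $j$-element subsets of $[n]$ as $gH$ runs over $\symm_n/H$, the image of $\Theta$ equals $\sum_U \Psi_U\bigl(\ker\bigl(x_{[n]\setminus U}|_{\kk\freelrb_{[n]\setminus U,n-j}}\bigr)\bigr)$. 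Since the words occurring in $\Psi_U(\aa)$ have their first $j$ letters forming a permutation of $U$, these subspaces have pairwise disjoint supports, so the sum is direct; as each $\Psi_U$ is injective on a domain of dimension $\dim N$, the image has dimension $\binom{n}{j}\dim N = [\symm_n:H]\dim N = \dim\bigl(\triv_{\symm_j}*N\bigr)$, forcing $\Theta$ to be an isomorphism onto its image, hence injective. In the $GL_n$ case one argues identically: the flags in $\Psi^{(q)}_U(\AAA)$ have $j$-th subspace equal to $U$, the index $U$ ranges over the $\qbinom{n}{j}{q}$ points of the Grassmannian of $j$-planes in $V$, and $[GL_n:H] = \qbinom{n}{j}{q}$, so the same dimension count applies. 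Apart from this, the only care needed is in matching the inflation/parabolic-induction conventions of Section~\ref{Frobenius-isomorphism-section} with the ``subspace/quotient'' roles of $U_0$ and $V/U_0$, which is routine.
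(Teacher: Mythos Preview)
Your proposal is correct and follows essentially the same approach as the paper. The only difference is organizational: the paper first builds the subspaces $E(U):=\Psi_U(\ker x|_{\cdots})$ for all $U$, observes their supports are disjoint so $\bigoplus_U E(U)$ sits inside the eigenspace, and then invokes Webb's \cite[Prop.~4.3.2]{Webb} to recognize this direct sum as the induced (parabolically induced) module $\triv * \ker(x|_{\cdots})$; you instead start from the single $H$-equivariant map $\Psi_{U_0}$, apply the induction--restriction adjunction to produce the $G$-map $\Theta$, and then verify injectivity by identifying $\mathrm{im}\,\Theta$ with the same direct sum $\bigoplus_U \Psi_U(\cdots)$ via the same disjoint-support argument. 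The eigenvector input (Propositions~\ref{prop:operator} and~\ref{q-operator-proposition}), the disjoint-support observation, and the inflation/triviality checks are all identical.
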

\begin{proof}
We give the proof for $\qfreelrb_n$; the proof for $\freelrb_n$ is analogous, but easier.

For each $j$-dimensional subspace $U$ of $V=(\F_q)^n$,
define a subspace $E(U)$ of $\kk \qfreelrb_{n,n}$ as the image under $\Psi^{(q)}_{U}$ of the nullspace for $x^{(q)}=x_{V/U}^{(q)}$
acting on $\kk \qfreelrb_{V/U,n-j} \cong \kk \qfreelrb_{n-j,n-j}$:
$$
E(U):=\Psi^{(q)}_{U}\left( \ker
x^{(q)}|_{\kk \qfreelrb_{V/U,n-j}} 
\right).
$$
According to Proposition~\ref{q-operator-proposition},
each $E(U)$ is a subspace of the $[j]_q$-eigenspace 
$\ker((x^{(q)}-[j]_q)|_{\kk \qfreelrb_{n,n}})$.  Note that vectors in
$E(U)$ are sums of complete flags $\AAA=(A_1,\ldots,A_n)$ that pass through $A_j=U$,
and hence for $U \neq U'$, they are supported on basis elements of $\kk\qfreelrb_{n,n}$
indexed by disjoint sets of complete flags.
Therefore the subpsace sum of all $E(U)$ is a direct sum $\bigoplus_{U} E(U)$
inside this $[j]_q$-eigenspace for $x$. It only remains to produce an isomorphism of
$GL_n$-representations 
\begin{equation}
\label{Webb-lemma-application}
\bigoplus_{U} E(U) 
\quad \cong \quad
\triv_{GL_j} * \ker(x^{(q)}|_{\kk \qfreelrb_{n-j,n-j}}).
\end{equation}
Recall $GL_n$ acts transitively on $j$-subspaces $U$. Fix
the particular subspace $U_0$
spanned by the first $j$ standard basis vectors in $V=(\F_q)^n$,
whose $GL_n$-stabilizer is the maximal parabolic subgroup $P_{j,n-j}$.  It follows (see, e.g.,
 Webb \cite[Prop. 4.3.2]{Webb}), that $\bigoplus_{U} E(U)$ 
 carries the $GL_n$-representation induced from $P_{j,n-j}$ acting on $E(U_0)$.  However, because elements in $E(U_0)$ are
 supported on flags $\AAA$ in $E(U_0)$ that
 all pass through $A_j=U_0$, this $P_{j,n-j}$-action is 
 inflated through
 the surjection $P_{j,n-j} \twoheadrightarrow GL_j \times GL_{n-j}$.  Furthermore, the definition of 
 $\Psi^{(q)}_{U_0}(-)$ as a symmetrized 
 sum over complete flags in $U_0$ shows that $GL_j$ fixes
 elements of $E(U_0)$ pointwise, while elements of $GL_{n-j}$ act
 as they do on $\ker(x^{(q)}|_{\kk \qfreelrb_{n-j,n-j}})$.
 Comparing with \eqref{parabolic-induction-defn} proves the desired isomorphism
 \eqref{Webb-lemma-application}.
\end{proof}

\subsection{The entire semigroup algebra}\label{entire-semigroup-section}

Having described the eigenspaces of the chamber spaces $\kk \freelrb_{n,n}$ and $\kk \qfreelrb_{n,n}$ as $G$-representations, we now turn to the entire semigroup algebras $\kk \freelrb_n$ and $\kk \qfreelrb_n$.

Our strategy here will be to introduce filtrations on $\kk \freelrb_n$, $\kk \qfreelrb_n$, and study the action of $x$ and $x^{(q)}$ on the associated graded modules with respect to these filtrations. (Recall from the discussion in Section~\ref{sec:semisimplicityandfiltrations} that by semisimplicity, this is an equivalent way to understand the $R^G$ and $\kk G$-module structures on $\kk \freelrb_n$, $\kk \qfreelrb_n$.)

Recall that for $\aa \in \freelrb_n$ and $\AAA \in \qfreelrb_n$ the length of $\aa$ is $\ell(\aa)$, and the length of $\AAA$ is $\ell(\AAA)$. 

\begin{definition}\label{def:filtration} \rm
Define
\[ \begin{aligned}
\kk \freelrb_{n,\geq \ell} :=& \span_{\kk} \{ \aa \in \freelrb_n: \ell(\aa) \geq \ell \}, \\ 
 \kk \qfreelrb_{n,\geq \ell} :=& \span_{\kk} \{ \AAA \in \qfreelrb_n: \ell(\AAA) \geq \ell \}.     \end{aligned}
\]
In other words, $\kk \freelrb_{n,\geq \ell}$ and $\kk \qfreelrb_{n,\geq \ell}$ are the $\kk$-spans of the monoid elements of length at least $\ell$.
\end{definition}

We then introduce filtrations
$\{ \kk \freelrb_{n,\geq \ell}\}_{\ell= 0,1,\dots,n,n+1}$ and $\{ \kk \qfreelrb_{n,\geq \ell}\}_{\ell= 0,1,\dots,n,n+1}$:
\begin{equation}
\label{the-filtrations}
\begin{aligned}
\{0\} &= \kk\freelrb_{n,\geq n+1} 
\subset  \kk\freelrb_{n, \geq n}
\subset \cdots 
\subset \kk\freelrb_{n, \geq 1}
\subset \kk\freelrb_{n, \geq 0} = \kk\freelrb_n, \\
\{0\} &= \kk\qfreelrb_{n,\geq n+1} 
\subset  \kk\qfreelrb_{n, \geq n}
\subset \cdots 
\subset \kk\qfreelrb_{n, \geq 1}
\subset \kk \qfreelrb_{n, \geq 0} = \kk\qfreelrb_n.
\end{aligned}
\end{equation}
Since $\ell(\aa \cdot \bb) \geq \ell(\bb)$,
it is easily seen that each $\kk \freelrb_{n,\geq \ell}$ is
a $\kk\freelrb_n$-submodule, and a $\kk\symm_n$-submodule. Analogously, $\kk \qfreelrb_{n,\geq \ell}$ is
a $\kk\qfreelrb_n$-submodule, and a $\kk GL_n$-submodule.

Recall that for $U \subset [n]$ of size $j$ one has  
$
\freelrb_U \cong \freelrb_j
$
and $x_{U} = \sum_{i \in U} (i)$. Analogously, recall that for $U$ a $j$-dimensional subspace of $V$, one has $\qfreelrb_U \cong \qfreelrb_j$ and 
\[ x^{(q)}_U = \sum_{\substack{\textrm{lines $L$}\\\text{ in $U$}}} (L). \]
Both $\kk\freelrb_U$ and $\kk\qfreelrb_U$
have $\kk$-vector space direct sum decompositions defined by length of words, so that one can identify $\kk\freelrb_{U,\ell} \cong \kk\freelrb_{j,\ell}$ and $\kk\qfreelrb_{U,\ell} \cong \kk\qfreelrb_{j,\ell}$ for $\ell = 0, 1, \ldots j$.

As $\kk$-vector spaces, one has a direct sum decomposition for the filtration factors
\begin{equation}
\begin{aligned}
\label{vector-space-direct-sum}
\kk \freelrb_{n,\geq \ell}
/\kk \freelrb_{n,\geq \ell+1}
&= \bigoplus_{\substack{U \subseteq \{1,2,\ldots,n\}:\\|U|=\ell}} \overline{\kk \freelrb_{U,\ell}} \\
\kk \qfreelrb_{n,\geq \ell}
/\kk \qfreelrb_{n,\geq \ell+1}
&= \bigoplus_{\substack{\F_q\text{-subspaces }U \subseteq (\F_q)^n:\\ \dim(U)=\ell}} \overline{\kk \qfreelrb_{U,\ell}}
\end{aligned}
\end{equation}
where $\overline{\kk \freelrb_{U,\ell}}$ and $\overline{\kk \qfreelrb_{U,\ell}}$ denote the image of the subspaces 
$\kk \freelrb_{U,\ell}$ and $\kk \qfreelrb_{U,\ell}$ within the quotient
on the left.  The next proposition is
a simple but crucial observation about these
summands in \eqref{vector-space-direct-sum},
used in the proof of Theorem \ref{thm:semigroupeigen}.

\begin{prop}
\label{summands-as-submodules-prop}
Consider the summands on the right sides of
\eqref{vector-space-direct-sum}.
\begin{itemize}
\item
Each $\overline{\kk \freelrb_{U,\ell}}$
is a $\kk\freelrb_n$-submodule of
$\kk \freelrb_{n,\geq \ell}
/\kk \freelrb_{n,\geq \ell+1}$,
annihilated by $(j)$ for $j \not\in U$.
\item
Each $\overline{\kk \qfreelrb_{U,\ell}}$
is a $\kk\qfreelrb_n$-submodule of
$\kk \qfreelrb_{n,\geq \ell}
/\kk \qfreelrb_{n,\geq \ell+1}$,
annihilated by $(L)$ for lines $L \not\subset U$. 
\end{itemize}

Consequently, one has
$$
\begin{array}{rccll}
x \cdot \overline{\aa}
&=&x_U \cdot \overline{\aa}
&\text{ for }
\overline{\aa} \text{ in }\overline{\kk \freelrb_{U,\ell}},\\
x^{(q)} \cdot \overline{\AAA}
&=&x^{(q)}_U \cdot \overline{\AAA}
&\text{ for }\overline{\AAA} \text{ in }\overline{\kk \qfreelrb_{U,\ell}}.
\end{array}
$$
\end{prop}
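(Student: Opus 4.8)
The plan is to deduce both bullet points, and the displayed consequence, from a single computation of how lengths and supports behave under left multiplication in each monoid. Start with $M=\freelrb_n$. The quotient $\kk\freelrb_{n,\ge\ell}/\kk\freelrb_{n,\ge\ell+1}$ has $\kk$-basis the images $\overline{\aa}$ of words $\aa$ of length \emph{exactly} $\ell$, and writing $\supp(\aa):=\{a_1,\dots,a_\ell\}$ for such a word, the summand $\overline{\kk\freelrb_{U,\ell}}$ in \eqref{vector-space-direct-sum} is precisely the span of those $\overline{\aa}$ with $\supp(\aa)=U$. First I would check that for an arbitrary $\bb\in\freelrb_n$ and a word $\aa$ of length $\ell$, deleting repeats in the concatenation $(b_1,\dots,b_m,a_1,\dots,a_\ell)$ keeps every letter of $\bb$ (these are already distinct) and keeps exactly the letters of $\aa$ not lying in $\supp(\bb)$; hence $\supp(\bb\cdot\aa)=\supp(\bb)\cup\supp(\aa)$ and $\ell(\bb\cdot\aa)=|\supp(\bb)\cup\supp(\aa)|\ge\ell$, with equality iff $\supp(\bb)\subseteq\supp(\aa)$.

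Everything then follows quickly. For $\overline{\aa}\in\overline{\kk\freelrb_{U,\ell}}$ and any $\bb\in\freelrb_n$, the image $\bb\cdot\overline{\aa}=\overline{\bb\cdot\aa}$ vanishes in the quotient unless $\supp(\bb)\subseteq U$, in which case $\bb\cdot\aa$ has length $\ell$ and support $U$, so $\bb\cdot\overline{\aa}\in\overline{\kk\freelrb_{U,\ell}}$; this is the submodule claim. Taking $\bb=(j)$ with $j\notin U$ gives the annihilation claim. Finally, writing $x=\sum_{i\in U}(i)+\sum_{i\notin U}(i)=x_U+\sum_{i\notin U}(i)$, the second sum kills $\overline{\kk\freelrb_{U,\ell}}$, so $x\cdot\overline{\aa}=x_U\cdot\overline{\aa}$ — here $x_U$ is read inside $\kk\freelrb_U\subseteq\kk\freelrb_n$, which makes sense because $\freelrb_U$ is literally a submonoid of $\freelrb_n$ (concatenate-and-delete-repeats is the same operation on letters from $U$).

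The argument for $\qfreelrb_n$ is parallel with one extra dimension-count. Now $\kk\qfreelrb_{n,\ge\ell}/\kk\qfreelrb_{n,\ge\ell+1}$ has basis the images $\overline{\AAA}$ of flags $\AAA=(A_1,\dots,A_\ell)$ of length $\ell$, and $\overline{\kk\qfreelrb_{U,\ell}}$ is the span of those with top space $A_\ell=U$. For $\BBB=(B_1,\dots,B_m)$ and such an $\AAA$, the product $\BBB\cdot\AAA=(B_1,\dots,B_m,B_m+A_1,\dots,B_m+A_\ell)^{\wedge}$ has top space $B_m+U$; and since each successive quotient $A_i/A_{i-1}$ is a line, the chain $B_m=B_m+A_0\subseteq B_m+A_1\subseteq\cdots\subseteq B_m+A_\ell=B_m+U$ climbs in dimension by steps of size $0$ or $1$, so its distinct members realize each dimension between $m$ and $\dim(B_m+U)$ exactly once. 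Hence deleting repeats retains the $m$ spaces $B_i$ together with $\dim(B_m+U)-m$ of the spaces $B_m+A_i$, i.e.\ $\ell(\BBB\cdot\AAA)=\dim(B_m+U)\ge\ell$, with equality iff $B_m\subseteq U$. As before this yields: $\BBB\cdot\overline{\AAA}=0$ in the quotient unless $B_m\subseteq U$, in which case the top space of $\BBB\cdot\AAA$ is $U$ so $\BBB\cdot\overline{\AAA}\in\overline{\kk\qfreelrb_{U,\ell}}$; taking $\BBB=(L)$ with $L\not\subseteq U$ gives the annihilation statement; and splitting the sum over lines $L$ in $V$ according to whether $L\subseteq U$ gives $x^{(q)}\cdot\overline{\AAA}=x^{(q)}_U\cdot\overline{\AAA}$, using that $\qfreelrb_U$ sits in $\qfreelrb_n$ as a submonoid since each $B_m+A_i$ is the same subspace whether computed in $U$ or in $V$.

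I expect the only point requiring genuine care is the flag-length identity $\ell(\BBB\cdot\AAA)=\dim(B_m+U)$, which hinges on the observation that the chain $(B_m+A_i)_i$ increases in dimension by unit (or zero) steps — a direct consequence of $\AAA$ having one-dimensional successive quotients. Everything else is bookkeeping with the $(\,\cdot\,)^{\wedge}$ operation and the evident submonoid inclusions $\freelrb_U\subseteq\freelrb_n$ and $\qfreelrb_U\subseteq\qfreelrb_n$; the resulting formulas are of course consistent with Lemma~\ref{x-multiplication} applied inside $\kk\freelrb_U\cong\kk\freelrb_j$ and $\kk\qfreelrb_U\cong\kk\qfreelrb_j$.
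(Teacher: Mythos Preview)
Your argument is correct and follows the same idea as the paper: multiplying by an element whose support (respectively, top space) is not contained in $U$ strictly increases length, hence vanishes in the quotient. The paper only offers a ``proof by example'' illustrating this for $n=3$, $\ell=2$, $U=\{1,2\}$ (and one line for the $q$-case), whereas you supply the full general computation, including the length identity $\ell(\BBB\cdot\AAA)=\dim(B_m+U)$ in the $q$-case; so your write-up is more complete but not a different route.
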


\begin{proof}[Proof by example.] Consider $n=3$ with $\ell=2$ and $U = \{ 1, 2 \}$. 
Then working in the quotient
$\overline{\kk \freelrb_{U, 2}}$, because
$3 \not\in U$,
the element $(3)$ of $\kk \freelrb_3$
will annihilate the element
$\overline{(1,2)}$ of $\kk \freelrb_{3,\geq 2}
/\kk \freelrb_{3,\geq 3}$. One has
$$
(3) \cdot \overline{(1,2)}=\overline{(3,1,2)}=0
\quad \text{ in }\kk \freelrb_{3,\geq 2}
/\kk \freelrb_{3,\geq 3}
$$
because $\ell(3,1,2)=3 > 2=\ell$. Thus $x = (1) + (2) + (3)$ acts on $\overline{(1,2)}$ as follows:
\[  
\begin{aligned}
x \cdot \overline{(1,2)}=
((1) + (2) + (3)) \cdot \overline{(1,2)}
= \overline{(1,2)} + \overline{(2,1)} + \overline{(3,1,2)}
=\overline{(1,2)}+\overline{(2,1)}
= x_U \cdot \overline{(1,2)}.
\end{aligned}
\]
The proof for $\qfreelrb_n$ is analogous:  one has $\ell( (L) \cdot \AAA) > \ell(\AAA)=\ell$ for lines $L \not\subset U$ and
$\AAA \in \qfreelrb_{U,\ell}$.
%
%
\end{proof}

We now prove our main result of this section,
encompassing Theorem~\ref{thm:semigroupeigen} from the Introduction.

\begin{thm}
\label{precise-version-of-semigroupeigen}
Let $\kk$ be a field in which $|G|$ is invertible. Then $x, x^{(q)}$ act diagonalizably on $\kk \freelrb_{n}, \kk \qfreelrb_{n}$, and
for each $j=0,1,2,\ldots,n$, their eigenspaces carry representations with
the same Frobenius map images
$$
\ch \ker\left( (x-j)|_{\kk \freelrb_n} \right)
= \sum_{\ell=j}^n \ h_{(n-\ell,j)} \cdot \derangement_{\ell-j}
= \ch_q \ker\left( (x^{(q)}-[j]_q)|_{\kk \qfreelrb_n} \right).
$$
In other words, one has $\kk G$-module isomorphisms
$$
\begin{aligned}
\ker\left( (x-j)|_{\kk \freelrb_n} \right)
& \cong \bigoplus_{\ell=j}^n
 \triv_{\symm_{n-\ell}} * \triv_{\symm_j} * \derangementrep_{\ell-j}, \\
\ker\left( (x^{(q)}-[j]_q)|_{\kk \freelrb_n^{(q)}} \right)
& \cong \bigoplus_{\ell=j}^n   \triv_{GL_{n-\ell}} * \triv_{GL_j} * \derangementrep^{(q)}_{\ell-j}.
\end{aligned}
$$
\end{thm}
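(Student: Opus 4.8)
The plan is to exploit the filtration \eqref{the-filtrations} together with the semisimplicity observation from Section~\ref{sec:semisimplicityandfiltrations}: since $x$ (resp. $x^{(q)}$) acts semisimply on the whole monoid algebra, and each filtration piece $\kk\freelrb_{n,\geq\ell}$ is both an $R^G$-submodule and a $\kk G$-submodule, we get $\kk G$-module (and $R^G$-module) isomorphisms
\[
\kk\freelrb_n \cong \bigoplus_{\ell=0}^{n} \kk\freelrb_{n,\geq\ell}/\kk\freelrb_{n,\geq\ell+1},
\qquad
\kk\qfreelrb_n \cong \bigoplus_{\ell=0}^{n} \kk\qfreelrb_{n,\geq\ell}/\kk\qfreelrb_{n,\geq\ell+1},
\]
so it suffices to compute the $j$-eigenspace of $x$ (resp. $[j]_q$-eigenspace of $x^{(q)}$) on each associated graded piece and sum up. I will write the argument for $\qfreelrb_n$ and note the $\freelrb_n$ case is identical with the obvious substitutions.

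The key step is to identify the $[j]_q$-eigenspace on the $\ell$-th graded piece. By \eqref{vector-space-direct-sum}, that graded piece decomposes as $\bigoplus_{\dim U = \ell} \overline{\kk\qfreelrb_{U,\ell}}$, and by Proposition~\ref{summands-as-submodules-prop}, $x^{(q)}$ acts on $\overline{\kk\qfreelrb_{U,\ell}}$ as $x^{(q)}_U$, i.e. exactly as the generator of the invariant ring for the smaller monoid $\qfreelrb_U \cong \qfreelrb_\ell$ acting on its own \emph{chamber space} $\kk\qfreelrb_{\ell,\ell}$ (the length-$\ell$ elements of $\qfreelrb_U$ are precisely the complete flags in $U$). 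Hence by Theorem~\ref{chamber-eigenspaces-thm} applied with $n$ replaced by $\ell$, the $[j]_q$-eigenspace of $x^{(q)}$ on $\overline{\kk\qfreelrb_{U,\ell}}$ has Frobenius image $h_j\cdot\derangement_{\ell-j}$, as a representation of the stabilizer $GL(U)\cong GL_\ell$. Summing over all $\ell$-dimensional subspaces $U$, with $GL_n$ acting transitively on them and $\overline{\kk\qfreelrb_{U,\ell}}$ only supported on flags through $U$ (so the sum over $U$ is a direct sum permuted by $GL_n$), the standard ``induction from a point-stabilizer'' principle (as in Webb \cite[Prop.~4.3.2]{Webb}, used already in the proof of Lemma~\ref{Wachs-lemma}) identifies $\bigoplus_{\dim U=\ell}\ker((x^{(q)}-[j]_q)|_{\overline{\kk\qfreelrb_{U,\ell}}})$ with the parabolic induction from $P_{\ell,n-\ell}$, inflated through $P_{\ell,n-\ell}\twoheadrightarrow GL_\ell\times GL_{n-\ell}$ of the trivial $GL_{n-\ell}$-action tensored with the chamber-space eigenspace for $GL_\ell$; that is, $\triv_{GL_{n-\ell}} * \bigl(\triv_{GL_j} * \derangementrep^{(q)}_{\ell-j}\bigr)$, with Frobenius image $h_{n-\ell}\cdot h_j\cdot\derangement_{\ell-j} = h_{(n-\ell,j)}\cdot\derangement_{\ell-j}$.

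Finally, summing the contributions over $\ell = j, j+1, \ldots, n$ (the piece $\overline{\kk\qfreelrb_{U,\ell}}$ contributes a $[j]_q$-eigenspace only when $\ell\geq j$, since for $\ell < j$ the operator $x^{(q)}_U$ on $\kk\qfreelrb_{\ell,\ell}$ has eigenvalues among $[0]_q,\dots,[\ell]_q$ and $[j]_q \notin$ that list by distinctness from Theorem~\ref{semisimplicity-theorem}) yields
\[
\ch_q\ker\bigl((x^{(q)}-[j]_q)|_{\kk\qfreelrb_n}\bigr) = \sum_{\ell=j}^n h_{(n-\ell,j)}\cdot\derangement_{\ell-j},
\]
and the parallel computation on $\kk\freelrb_n$ gives the same symmetric function, proving both the stated Frobenius identity and the explicit $\kk G$-module decomposition. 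Diagonalizability of $x,x^{(q)}$ on the whole monoid algebra is immediate from Theorem~\ref{semisimplicity-theorem}(ii) together with the fact that $\kk\freelrb_n, \kk\qfreelrb_n$ are finite-dimensional $R^G$-modules.

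The main obstacle I anticipate is \textbf{verifying the ``induction from a point-stabilizer'' bookkeeping carefully enough}: one must check that the $GL_n$-orbit decomposition of the graded piece into the $\overline{\kk\qfreelrb_{U,\ell}}$ really is compatible with the $x^{(q)}$-action (so that taking eigenspaces commutes with the $GL_n$-permutation of summands), and that the stabilizer $P_{\ell,n-\ell}$ acts on $\overline{\kk\qfreelrb_{U_0,\ell}}$ through the inflation $P_{\ell,n-\ell}\twoheadrightarrow GL_\ell\times GL_{n-\ell}$ with $GL_{n-\ell}$ acting trivially --- this last point needs the observation that the flags in $\qfreelrb_{U_0,\ell}$ live entirely inside $U_0$ so the ``$GL_{n-\ell}$ part'' of the Levi moves nothing. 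This is precisely the kind of argument already carried out in Lemma~\ref{Wachs-lemma}, so it should go through, but it is the step where a careless identification of which factor acts trivially would give the wrong answer (e.g. $h_{n-\ell}$ versus $h_1^{n-\ell}$).
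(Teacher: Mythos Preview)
Your proposal is correct and follows essentially the same approach as the paper: reduce via the length filtration and semisimplicity to the associated graded pieces, split each graded piece into the summands $\overline{\kk\qfreelrb_{U,\ell}}$ via \eqref{vector-space-direct-sum}, use Proposition~\ref{summands-as-submodules-prop} to replace $x^{(q)}$ by $x^{(q)}_U$, invoke Theorem~\ref{chamber-eigenspaces-thm} on the chamber space of the smaller monoid, and then assemble the summands over $U$ into a parabolic induction via \cite[Prop.~4.3.2]{Webb} with the inflation argument. The paper organizes the steps in the same order (writing the $\freelrb_n$ case in detail first and then the $q$-analogue), and the bookkeeping you flag as the main obstacle is handled there exactly as you anticipate.
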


\begin{proof}
The filtrations in \eqref{the-filtrations} show that
\begin{equation}
\begin{aligned}
\ker\left( (x-j)|_{\kk \freelrb_n} \right)
&\cong
\bigoplus_{\ell=0}^n 
\ker \left( (x-j)|_{\kk \freelrb_{n,\geq \ell}
/\kk \freelrb_{n,\geq \ell+1}} \right),\\
\ker\left( (x^{(q)}-[j]_q)|_{\kk \qfreelrb_n} \right)
&\cong
\bigoplus_{\ell=0}^n 
\ker \left( (x^{(q)}-[j]_q)|_{\kk \qfreelrb_{n,\geq \ell}
/\kk \qfreelrb_{n,\geq \ell+1}} \right).
\end{aligned}
\end{equation}
It remains to analyze each summand on the right. 

We have seen that \eqref{vector-space-direct-sum} is
also a direct sum decomposition as $\kk M$-modules for $M = \kk\freelrb_n, \kk \qfreelrb$.  
For $G = \symm_n, GL_n$, the action of $\kk M$
and $\kk G$ on
both sides in \eqref{vector-space-direct-sum} commute. 

In the case of $M = \freelrb_n$, this leads to the following equalities
and isomorphisms of $\kk \symm_n$-modules, explained below. Let $U_0:= \{ 1, 2, \ldots, \ell \}$. Then
\begin{equation*}
\label{eigenspace-space-direct-sum}
\begin{aligned}
\ker \left( (x-j)|_{\kk \freelrb_{n,\geq \ell}
/\kk \freelrb_{n,\geq \ell+1}} \right)
&\overset{(i)}{=} \bigoplus_{\substack{U \subseteq \{1,2,\ldots,n\}:\\|U|=\ell}} 
\ker( (x-j)|_{\overline{\kk \freelrb_{U,\ell}}} )\\
&\overset{(ii)}{=} \bigoplus_{\substack{U \subseteq \{1,2,\ldots,n\}:\\|U|=\ell}} 
\ker( (x_{U}-j)|_{\overline{\kk \freelrb_{U,\ell}}} )\\
&\overset{(iii)}{\cong} 
\triv_{\symm_{n-\ell}} * \ker\left( (x_{U_{0}}-j )|_{\kk \freelrb_{\ell,\ell}}\right)  \\
&\overset{(iv)}{\cong} \begin{cases}
0 &\text{ if $\ell < j$}\\
\triv_{\symm_{n-\ell}} *  \triv_{\symm_{j}} * \derangementrep_{\ell -j} &\text{ if $\ell \geq j$.}
\end{cases}
\end{aligned}
\end{equation*}     

\begin{itemize}
    \item Equality (i) is the restriction of
the $\kk\symm_n$-module isomorphism \eqref{vector-space-direct-sum} to  $j$-eigenspaces for $x$.
\item Equality (ii) arises because $x$ acts the same as $x_{U}$ on $\overline{\freelrb_{U,\ell}}$, by Proposition~\ref{summands-as-submodules-prop}.
\item Isomorphism (iii) arises because
the summands indexed by $U$ with $|U|=\ell$ are permuted transitively by $\symm_n$ with the typical summand for 
$U_0=\{1,2,\ldots,\ell\}$ stabilized by
the subgroup $\symm_{U_0} \cong \symm_\ell$.
Thus this is an induced
$\kk\symm_n$-module, e.g., by applying \cite[Prop. 4.3.2]{Webb}.
\item 
Isomorphism (iv) comes from applying Theorem \ref{chamber-eigenspaces-thm} to $\kk \freelrb_\ell$.
\end{itemize}

The argument for $M = \qfreelrb_n$ is similar. In particular, setting $U_0$ to be the $\F_q$-span of the first $\ell$ standard basis vectors $e_1, e_2, \ldots, e_\ell$ in $(\F_q)^n$, one has
equalities and isomorphisms of $\kk GL_n$-modules

\begin{equation*}
\label{eigenspace-space-direct-sum}
\begin{aligned}
\ker \left( (x^{(q)}-[j]_q)|_{\kk \qfreelrb_{n,\geq \ell}
/\kk \qfreelrb_{n,\geq \ell+1}} \right)
&\overset{(i)}{=} \bigoplus_{\substack{U \subseteq (\F_q)^{n}:\\\dim(U)=\ell}} 
\ker( (x^{(q)}-[j]_q)|_{\overline{\kk \qfreelrb_{U,\ell}}} )
\overset{(ii)}{=} \bigoplus_{\substack{U \subseteq (\F_q)^n :\\\dim(U)=\ell}} 
\ker( (x^{(q)}_{U}-[j]_q)|_{\overline{\kk \qfreelrb_{U,\ell}}} )\\
&\overset{(iii)}{\cong} 
\triv_{GL_{n-\ell}} * \ker\left( (x^{(q)}_{U_{0}}-[j]_q )|_{\kk \qfreelrb_{\ell,\ell}}\right)
\overset{(iv)}{\cong} 
   \begin{cases}
   0 &\text{ if $\ell < j$}\\
   \triv_{GL_{n-\ell}} *  \triv_{GL_j} * \derangementrep^{(q)}_{\ell -j} &\text{ if $\ell \geq j$}
   \end{cases}
\end{aligned}
\end{equation*}
where isomorphisms (i), (ii) and (iv) are
justified exactly as in the $q =1$ proof above. For isomorphism (iii), note (as in the proof of Lemma~\ref{Wachs-lemma}) that $GL_n$ acts transitively on $\ell$-subspaces $U$, and that $U_0$ has $GL_n$-stabilizer subgroup $P_{\ell, n-\ell,}$, so that by \cite[Prop. 4.3.2]{Webb}, 
\[ \bigoplus_{\substack{U \subseteq (\F_q)^n :\\\dim(U)=\ell}} 
\ker( (x^{(q)}_{U}-[j]_q)|_{\overline{\kk \qfreelrb_{U,\ell}}} )\]
carries the $GL_n$-representation induced from the $P_{\ell, n-\ell}$-action on $ \ker\left( (x^{(q)}_{U_{0}}-[j]_q )|_{\kk \qfreelrb_{\ell,\ell}}\right).$
Since every $\AAA \in \kk \qfreelrb_{U_{0},\ell} \cong \kk \qfreelrb_{\ell, \ell}$ is a flag $(A_1, \cdots, A_{\ell})$ with $A_{\ell} = U_0$ it follows that this $P_{\ell, n-\ell}$-action is inflated through the surjection $P_{\ell, n-\ell} \twoheadrightarrow GL_{\ell} \times GL_{n-\ell}$, where the action of $GL_{\ell}$ is as $\ker\left( (x^{(q)}_{U_{0}}-[j]_q )|_{\kk \qfreelrb_{\ell,\ell}}\right)$ and the action of $GL_{n-\ell}$ is trivial. 
\end{proof}

\begin{example}\label{ex:n=2}
We illustrate Theorem~\ref{thm:semigroupeigen} computing the Frobenius map image for each
$j$-eigenspace of $x$ on $\kk\mathscr{F}_n$, or
equivalently the 
$q$-Frobenius map image
for each 
$[j]_q$-eigenspace of $x^{(q)}$ on $\kk\mathscr{F}_n^{(q)}$.
For $n=2,3$, the tables below show these
symmetric functions in their $j^{th}$ row,  decomposed
into columns labeled by $\ell$, indexing each filtration factor from \eqref{vector-space-direct-sum} that contributes a term.

\begin{center}
{\bf Frobenius map images for eigenspaces of $x, x^{(q)}$ on $\kk\mathscr{F}_2, \kk\mathscr{F}_2^{(q)}$:}\\[.1in]
\begin{tabular}{| c||c  c|c c| c c|}\hline
  &$\ell = 0$ & & $\ell = 1$ & & $\ell = 2$ &  \\ \hline\hline
  & $h_2 \cdot \derangement_0$ &$=$ & $h_1 \cdot \derangement_1$ &$=$ & $h_0 \cdot \derangement_2$ & $=$ \\
$j=0$ & $h_2\cdot s_{()}$ & $=$ & $h_1 \cdot 0$ & $=$ & $h_0 \cdot s_{(1,1)}$ & $=$\\
 & $s_{(2)}$ &  & $0$ &  & $s_{(1,1)}$ & \\[5pt] \hline
  & & & $h_{(1,1)}\cdot \derangement_0$ & $=$ & $h_1 \cdot \derangement_1$ & $=$\\
 $j=1$ & & & $h_{(1, 1)}\cdot s_{()}$ & $=$ & $h_1 \cdot 0$ & $=$\\
 & & & $s_{(1,1)} + s_{(2)}$ & & $0$ &\\[5pt] \hline
  & & & & & $h_2 \cdot \derangement_0$ & $=$\\
 $j=2$ & & & & & $h_2 \cdot s_{()}$ & $=$\\
& & & & & $s_{(2)}$ & \\[5pt] \hline
\end{tabular}
\end{center}

\vspace{1cm}

\begin{center}
{\bf Frobenius map images for eigenspaces of $x, x^{(q)}$ on $\kk\mathscr{F}_3, \kk\mathscr{F}_3^{(q)}$:}\\[.1in]
\begin{tabular}{| c||c  c|c c| c c| c c|}\hline 
 & $ \ell = 0$ & & $\ell = 1$ & & $\ell = 2$ & & $\ell = 3$ & \\ \hline \hline
 & $h_{3}\cdot \derangement_0$ & $=$ & $h_{2}\cdot \derangement_{1}$ & $=$ & $h_{1}\cdot \derangement_2$ & $=$ & $h_0 \cdot \derangement_3$ & $=$\\
 $j=0$ & $h_3 \cdot s_{()}$ & $=$ & $h_2 \cdot 0$ & $=$ & $h_1 \cdot s_{(1,1)}$ & $=$ & $h_0 \cdot s_{(2,1)}$ & $=$\\
 & $s_{(3)}$ & & $0$ & & $s_{(2,1)} + s_{(1, 1, 1)}$ & & $s_{(2,1)}$ & \\[5pt] \hline
  & & & $h_{(2,1)}\cdot \derangement_0$ & $=$ & $h_{(1, 1)}\cdot \derangement_1$ & $=$ & $h_{1}\cdot \derangement_2$ & $=$\\
  $j=1$ & & & $h_{(2,1)}\cdot s_{()}$ & $=$ & $h_{(1,1)} \cdot 0$ & $=$ & $h_1 \cdot s_{(1,1)}$ & $=$\\
   & & & $s_{(3)} + s_{(2, 1)}$ & & $0$ & & $s_{(2,1)} + s_{(1, 1, 1)}$ & \\[5pt] \hline
    & & & & & $h_{(2,1)}\cdot \derangement_0$ & $=$ & $h_{2}\cdot \derangement_1$ & $=$\\
  $j=2$ & & & & & $h_{(2,1)}\cdot s_{()}$ & $=$ & $h_2 \cdot 0$ & $=$\\
   & & & & & $s_{(3)} + s_{(2, 1)}$ & & $0$ & \\[5pt] \hline
    & & & & & & & $h_{3}\cdot \derangement_0$ & $=$\\
  $j=3$ & & & & & & & $h_3 \cdot s_{()}$ & $=$\\
   & & & & & & & $s_{(3)}$ & \\[5pt] \hline
\end{tabular}
\end{center}
\end{example}


\bibliographystyle{abbrv}
\bibliography{bibliography}
\end{document}